\documentclass[11pt,a4paper,reqno]{amsart}
\usepackage{amssymb, amsmath, latexsym, mathrsfs, graphicx, stmaryrd,
amsthm, psfrag, color, multirow, amsthm}
\usepackage{rotating}
\usepackage{url}
\usepackage[enableskew, vcentermath]{youngtab}
\usepackage[margin=1.3in]{geometry}

\usepackage{amssymb}
\usepackage{mathrsfs}
\usepackage{mathtools}
\usepackage{amsbsy}
\usepackage[all]{xy}
\usepackage{bm}
\usepackage[backref=page,linktocpage]{hyperref}
\usepackage{tikz}
\usepackage{array}
\usepackage{float}
\usepackage{enumerate}
\usepackage{xcolor}
\usepackage{hhline}
\allowdisplaybreaks
\usepackage[noadjust]{cite}
\usepackage{algorithm}
\usepackage[noend]{algpseudocode}
\usepackage[section]{placeins}

\usepackage{caption}
\usepackage{subcaption}
\usepackage{tabu}
\usepackage{diagbox}
\usepackage{bbm}
\usepackage{booktabs}

\usepackage[noabbrev,capitalise]{cleveref}

\newenvironment{enumerate*}%
  {\begin{enumerate}[(I)]%
    \setlength{\itemsep}{10pt}%
    \setlength{\parskip}{0pt}}%
  {\end{enumerate}}

\newtheorem{theorem}{Theorem}[section]
\newtheorem{proposition}[theorem]{Proposition}
\newtheorem{corollary}[theorem]{Corollary}
\newtheorem{conjecture}[theorem]{Conjecture}
\newtheorem{question}[theorem]{Question}

\newtheorem{lemma}[theorem]{Lemma}

\theoremstyle{definition}
\newtheorem{definition}[theorem]{Definition}
\newtheorem{remark}[theorem]{Remark}
\newtheorem{example}[theorem]{Example}

\DeclarePairedDelimiter{\abs}{\lvert}{\rvert}

\DeclareMathOperator{\conv}{conv}
\DeclareMathOperator{\relint}{relint}

\DeclareMathOperator{\Aut}{Aut}

\DeclareMathOperator{\Vol}{Vol}

\newcommand{\cB}{\mathcal B}

\newcommand{\cF}{\mathcal F}

\newcommand{\RR}{\mathbb R}
\newcommand{\ZZ}{\mathbb Z}

\usepackage[dvipsnames]{xcolor}

\hypersetup{
    colorlinks = true,
    linkbordercolor = {white},
    linkcolor = {NavyBlue},
    anchorcolor = {black},
    citecolor = {NavyBlue},
    filecolor = {cyan},
    menucolor = {NavyBlue},
    runcolor = {cyan},
    urlcolor = {NavyBlue}
}

\title[Matroid toric ideals without Gr\"obner bases]{There are matroid toric ideals\\without  quadratic Gr\"obner Bases}

\author{Jes\'us A. De Loera}
\author{Luis Ferroni}
\author{Santiago Morales}
\author{J\"org Rambau}
\date{\today}

\address{(J. De Loera \& S. Morales) University of California, Davis, United States}
\email{deloera@math.ucdavis.edu}
\email{moralesduarte@ucdavis.edu}

\address{(L. Ferroni) Universit\`a di Pisa, Italy}
\email{luis.ferroni@unipi.it}

\address{(J. Rambau) Universit\"at Bayreuth, Germany}
\email{joerg.rambau@uni-bayreuth.de}

\begin{document}

\begin{abstract}
    Our paper shows that if a matroid contains the Fano plane or its dual as a minor, then its toric ideal does not have any quadratic Gr\"obner basis. More than 25 years ago, Hibi, Herzog, and Sturmfels established a direct connection between the existence of quadratic Gr\"obner bases and regular unimodular flag triangulations. Our paper solves a famous question posed by Herzog and Hibi on a polyhedral reformulation for the existence of quadratic Gr\"obner bases: we show that the base polytopes of the Fano plane and its dual do not have regular unimodular flag triangulations which implies the main result on Gr\"obner bases. 
    Our proof relies on several novel tools: a lemma that connects the $1$-skeleton of a lattice polytope to the lattice points in its dilations, an encoding with Boolean formulas and SAT solvers, and symmetry-breaking arguments. 
\end{abstract}

\maketitle

\setcounter{tocdepth}{1}
\tableofcontents

\section{Introduction}

In 1980, Neil White conjectured that the toric ideal of a matroid base polytope is quadratically generated \cite{White1980}. This old
conjecture creates a liaison between matroids and commutative algebra and has attracted the attention of many mathematicians. A stronger version of White's conjecture was proposed by Herzog and Hibi in 2002, who asked whether every integer polymatroid toric ideal is $G$-quadratic, i.e.,  admits a quadratic Gröbner basis \cite[p.~241]{HerzogHibi2002}. Although this stronger property has been supported by several positive results (for example in \cite{Sturmfels1996, Blum2001, LamPostnikov2007, LamPostnikov2018, LamPostnikovPolypositroids}), not only this one, but also White's original conjecture remains unsolved after more than four decades. 

By work of Sturmfels \cite{Sturmfels1996}, together with an observation of Ohsugi and Hibi \cite{OhsugiHibi1999}, the existence of a quadratic Gröbner basis is equivalent to the existence of a \emph{regular unimodular flag\footnote{Some authors use the name \emph{quadratic triangulation} when alluding to regular unimodular flag triangulations.} triangulation} of the matroid base polytope. Therefore, the algebraic question posed by Herzog and Hibi can be recast into a problem in discrete geometry and polyhedra.

Until now, Herzog and Hibi's question was verified to have a positive answer for all except two matroids on a ground set with at most seven elements; see the work of Hayase, Hibi, Katsuno, and Shibata \cite{Hayase2022}. The two missing cases, both of which were analyzed by computer for over a year by the aforementioned authors without a conclusive answer, are the (in)famous \emph{Fano matroid $F_7$} and its dual, the matroid $F_7^*$. In other words, the existence of regular unimodular \emph{flag} triangulation for the base polytope of the Fano matroid \(P_{F_7}\) and its dual remains still open, and a positive answer would settle a (very strong) form of White’s conjecture for all matroids on at most seven elements. Conversely, if one were able to disprove the existence of regular unimodular flag triangulations for the base polytope of the Fano matroid (or, equivalently, its dual), this would settle in the negative the question raised by Herzog and Hibi.

Recent progress isolates the remaining difficulty to the \emph{flag} condition. Haws in his Ph.D. thesis conjectured that every matroid base polytope admits a unimodular triangulation \cite[Conjecture~57]{Haws2009}. This has now been proved true by Backman and Liu: every matroid base polytope admits a \emph{unimodular} triangulation \cite{LiuBackman2025} which can in fact be chosen to be regular. The Fano case therefore motivates the remaining question: can one enforce flagness while retaining both regularity and unimodularity?

The main result of this paper answers  the preceding question in the negative, and therefore also settles the question raised by Herzog and Hibi.

We remark that the obstruction found here is not regularity alone, nor unimodularity alone, nor flagness alone. As mentioned above, Backman and Liu showed that for every matroid $M$, the base polytope $P_M$ has a
regular unimodular triangulation \cite{LiuBackman2025}. On the other hand, we exhibit unimodular flag triangulations of \(P_{F_7}\). Although we found unimodular flag triangulations, what fails for \(F_7\) is the simultaneous satisfaction of the three requirements:
\[
\text{regular} \quad+\quad \text{unimodular} \quad+\quad \text{flag}.
\]

\smallskip
\begin{theorem}
    The base polytope of the matroid $F_7$ has no regular unimodular flag triangulations. 
\end{theorem}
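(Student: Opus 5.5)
The approach is to turn the nonexistence of a regular unimodular flag triangulation of $P_{F_7}$ into a finite combinatorial statement and refute it by computer, with symmetry cutting the search down. We work with the $7$ elements and $28$ bases of $F_7$, so $P_{F_7}$ is a $6$-dimensional $0/1$ polytope with $28$ vertices. Since the polytope is $0/1$, the lattice points of $P_{F_7}$ are exactly its vertices. A flag triangulation is determined by its edge graph: the simplices are precisely the cliques of that graph. Unimodular means every maximal simplex has normalized volume $1$, and the number of maximal simplices must equal the normalized volume $\Vol(P_{F_7})$, which is computable in advance.

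\textbf{Step 1: structure of any candidate.} Every edge of $P_{F_7}$ (a basis exchange pair) lies in every triangulation. The remaining candidate edges are segments $[e_B,e_{B'}]$ with no other lattice points, and we keep only those lying in some unimodular $6$-simplex inside the polytope. Here the paper's lemma linking the $1$-skeleton to lattice points in dilations should help. A non-edge segment between two vertices has its midpoint in $\tfrac12(\ldots)$, that is, a lattice point of $2P$. In a unimodular triangulation the lattice points of $2P$ are exactly the points $v+w$ with $v,w$ in a common simplex, so each lattice point of $2P$ must be represented by an edge or a vertex of the triangulation. For each point of $2P$ this forces a disjunction over its representations $v+w$, and those representations are mutually exclusive crossings. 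This gives strong local constraints on the graph.

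\textbf{Step 2: SAT encoding.} We introduce a Boolean variable $x_{vw}$ for each candidate diagonal. The constraints are as follows.
\begin{enumerate}
\item Crossing segments whose relative interiors meet cannot both be chosen.
\item Each lattice point of $2P$ is covered by at least one chosen pair.
\item Every chosen clique of size $7$ is unimodular; more generally, no non-unimodular $6$-simplex has all its edges chosen.
\end{enumerate}
Covering each point of $2P$ uniquely by edge-midpoint sums already pins down the triangulation locally. We then enumerate all satisfying assignments, that is, all unimodular flag triangulations. We break the symmetry using $\Aut(F_7)\cong \mathrm{PGL}(3,2)$, of order $168$, together with the central symmetry $x\mapsto \mathbf 1-x$ composed with duality where applicable, by fixing a lexicographically minimal choice on one orbit of diagonals. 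Each surviving assignment is checked to be a genuine triangulation: the cliques give the right number of unimodular simplices, and they cover the polytope via a volume or proper-intersection check.

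\textbf{Step 3: regularity.} For each unimodular flag triangulation up to symmetry (the paper says such triangulations exist), we show non-regularity with a Farkas certificate. We solve the LP that asks for heights $h$ making every interior codimension-one face locally convex, meaning that for each pair of adjacent simplices the lifted circuit inequality is strict. Infeasibility yields a dual certificate, a positive combination of circuit inequalities summing to zero. This certificate can be verified exactly in rational arithmetic.

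The main obstacle is Step 2. The raw search space over diagonals among $28$ vertices is large, and a naive encoding of ``is a triangulation'' is expensive. The success of the approach depends on the lattice point constraints from $2P$ being strong enough, together with symmetry breaking, to make the enumeration finish with a small list of flag triangulations. If the list is still large, the fallback is to add the regularity condition directly as local convexity constraints on short circuits, which are obstructions of the kind that appear in Gale-dual cycles, and to derive infeasible sub-configurations shared by every solution.
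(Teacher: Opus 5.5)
\textbf{The step your plan hinges on would fail.} As a finite decision procedure your plan is valid in principle. But it needs the Step~2 enumeration to ``finish with a small list'' of unimodular flag triangulations, which Step~3 then refutes one at a time. For $P_{F_7}$ that list is not small. Exhaustive enumeration of unimodular flag triangulations, by symmetric reverse search with unimodularity and flag pruning, had already produced well over ten thousand non-regular ones. That was after several days on $192$ threads, and it still had not terminated.

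Your formula is also weaker than the set it is meant to capture. It uses only level-$2$ information and edge--edge crossings. It has no exactly-one constraint among the competing edge and triangle supports of lattice points of $3P$. Its solution set is therefore larger still and needs further filtering. ``Enumerate, then certify each one non-regular'' is precisely the route that could not be completed.

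\textbf{The missing idea.} You need to impose regularity \emph{inside} the search, in terms of the edge variables alone, so that triangulations are never enumerated. Suppose $v_i+v_j=v_k+v_\ell$ and the triangulation picks $\{i,j\}$ as the carrier of this midpoint. Then every regular lifting satisfies $h_i+h_j<h_k+h_\ell$. This depends only on the option chosen in each level-$2$ fiber, not on the maximal simplices.

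The paper runs a lazy SAT/LP loop built on this:
\begin{enumerate}
\item The solver is given exactly-one carrier constraints at levels $2$ and $3$; flagness turns each candidate triangle into a conjunction of three edge variables. It proposes a graph.
\item An LP tests the height inequalities for the chosen options.
\item On infeasibility, a small set of selected edges with a nonnegative integer combination $\sum_a\lambda_a r_a=0$ is extracted. The corresponding blocking clause is added, together with its whole $\Aut(F_7)$-orbit.
\end{enumerate}
The search fixes $013\mid 246$ in one six-way fiber and splits on the six options of another. All six instances are UNSAT. Hence no graph satisfying the level-$2$ and level-$3$ carrier constraints admits heights satisfying these inequalities.

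Your closing fallback points in this direction, but it is not enough as stated. Local convexity across walls is a condition on pairs of adjacent maximal cliques, which your encoding does not represent. The key observation is that the $2$--$2$ circuits from level-$2$ fibers already give enough necessary inequalities, attached directly to the selector variables.

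A minor point: $x\mapsto\mathbf 1-x$ sends $P_{F_7}$ to $P_{F_7^*}$, not to itself, so it contributes no symmetry here. The paper breaks symmetry with $\Aut(F_7)$, of order $168$.
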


\smallskip

The key methods and new ideas that made the solution of this old problem possible are the following. In any unimodular triangulation of a lattice polytope \(P\), every lattice point \(y\in tP\) has a unique carrier face with at most \(t\) vertices.  For a flag triangulation, carrier faces are cliques of the $1$-skeleton, so the necessary conditions for existence can be encoded in Boolean Conjunctive Normal Form (abbreviated CNF) using edge indicator variables. Under these constraints, we used SAT solvers to find unimodular flag triangulations of \(P_{F_7}\) (for more background on  Boolean SAT solvers, SAT-based mathematical proofs and symmetry breaking, see \cite{BiereHeuleVanMaarenWalsh2021, HeuleKullmann2017,
HeuleKullmannMarek2016, BrightKotsireasGanesh2022, SAT4Math}).  Independently, in an attempt to enumerate all unimodular flag triangulations of \(P_{F_7}\), we also found over $12{,}000$ (non-regular) unimodular flag triangulations by a newer version of TOPCOM that can restrict enumeration to unimodular (old feature) and flag (new feature, see Sec.~\ref{sec:TOPCOM}) triangulations during the process \cite{Rambau:TOPCOM-preprint:2026}.  The full enumeration, however, could not be completed before the publication of this paper.

For the Fano matroid case, the proof of our main result is based on a negative certificate of regularity. Regularity is imposed only through necessary height inequalities. In each
\emph{quadratic fiber} \(v_i+v_j=v_k+v_\ell\), the selected edge must be strictly lower than every other edge in the same fiber. We call this necessary condition quadratic coherence. For \(P_{F_7}\), the \emph{carrier constraints of level 2 and 3} (derived from the dilations $2P$ and $3P$), together with \emph{symmetry breaking} and exact Farkas-certified obstructions to quadratic coherence, give six UNSAT certificates. Together, these show the three properties cannot coexist for the Fano matroid.

As we demonstrate, the property of having regular unimodular flag triangulations is closed under minors and duals. In particular, the preceding statement shows the absence of regular unimodular flag triangulations for the base polytopes of all matroids containing the Fano matroid or its dual as minors. 

\begin{corollary}
    There exist infinitely many matroids whose toric ideals do not possess quadratic Gr\"obner bases.
\end{corollary}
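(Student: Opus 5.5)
The corollary follows by combining the main theorem with the minor- and duality-closure of the property of admitting a regular unimodular flag triangulation. I would first recall the algebraic translation due to Sturmfels together with the observation of Ohsugi and Hibi: the toric ideal $I_M$ of a matroid $M$ has a quadratic Gr\"obner basis if and only if the base polytope $P_M$ has a regular unimodular flag triangulation. The initial ideal for a generic weight is squarefree exactly when the induced regular triangulation is unimodular, and it is generated in degree two exactly when the triangulation is flag. So it suffices to exhibit infinitely many matroids $M$ such that $P_M$ has no regular unimodular flag triangulation.

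Next I would establish closure under minors and duals.

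\emph{Duality.} The map $x\mapsto \mathbf 1-x$ is a lattice-preserving affine isomorphism $P_M\to P_{M^*}$, so it transports triangulations while preserving regularity, unimodularity and flagness.

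\emph{Deletion and contraction.} The polytopes $P_{M\setminus e}$ (for $e$ not a coloop) and $P_{M/e}$ (for $e$ not a loop) are faces of $P_M$, namely $\{x_e=0\}$ and $\{x_e=1\}$, up to a coordinate projection that is a lattice isomorphism. A regular unimodular flag triangulation restricts to any face as a triangulation that is again regular (restrict the height function), unimodular, and flag, because the restriction is an induced subcomplex.

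Hence if $N$ is a minor of $M$ and $P_M$ is good, then $P_N$ is good. Combining this with the main theorem and duality, no matroid having $F_7$ or $F_7^*$ as a minor has a good base polytope.

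Finally I need infinitely many such matroids, and there are several easy families. The direct sum $F_7\oplus U_{1,n}$ for $n\ge 1$ (or $F_7$ plus $n$ parallel copies of an element) contains $F_7$ as a minor, by deleting or contracting the extra elements. Alternatively one can take the binary projective geometries $PG(r-1,2)$ for $r\ge 3$, which all contain $F_7=PG(2,2)$ as a restriction. These matroids are pairwise non-isomorphic because their ground sets have different sizes, so their toric ideals lack quadratic Gr\"obner bases.

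The main point needing care is the face-restriction step. I must check that flagness passes to faces: a minimal non-face of the restricted complex lies in the face and is a minimal non-face of the full complex, hence has size two. I must also check that the coordinate projection identifying the face with $P_{M\setminus e}$ or $P_{M/e}$ is unimodular. Loops and coloops are harmless degenerate cases.
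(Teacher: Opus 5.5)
Your proposal is correct and follows the paper's route. You combine the Sturmfels/Ohsugi--Hibi translation, inheritance of regular unimodular flag triangulations by faces (hence by deletion and contraction), duality via $x\mapsto \mathbf{1}-x$, and the main theorem for $F_7$. The one addition is that you name explicit infinite families, such as $F_7\oplus U_{1,n}$ or $PG(r-1,2)$; the paper leaves these implicit in the phrase ``all matroids containing the Fano matroid or its dual as minors.''
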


The Fano matroid is a sparse paving matroid, and a well-known conjecture posed by Mayhew, Newman, Welsh and Whittle \cite[Conjecture~1.7]{Mayhew2011} postulates that for any fixed sparse paving matroid $N$, asymptotically almost all matroids must contain $N$ as a minor (the specific case $N = F_7$ is discussed in detail by van der Pol in \cite[Section~5]{vanderPol2023}). If that conjecture is true, then our result actually implies that asymptotically $0\%$ of matroids have a regular unimodular flag triangulation or, in other words, that asymptotically $0\%$ of matroid toric ideals have a quadratic Gr\"obner basis.

\smallskip
It is worth mentioning that our result does not disprove White's conjecture, i.e., we do not disprove that toric ideals of matroids are quadratically generated. Since the Fano matroid has rank \(3\), Kashiwabara's theorem implies that \(I_{F_7}\) is generated by quadrics \cite{Kashiwabara2010}. Thus \(F_7\) separates quadratic generation from the existence of a quadratic Gr\"obner basis.

\subsection*{Additional related work}

A second conjecture posed by White in \cite{White1980} states that matroid toric ideals are generated by quadratic polynomials that come from symmetric exchanges between pairs of bases. Over the years many authors have proved results that support this conjecture, which we will henceforth call White's \emph{strong} conjecture. Notably, Bonin \cite{bonin} proved that it holds for all sparse paving matroids, whereas Blasiak proved it for graphic and cographic matroids. Notably, Laso{\'n} and Micha{\l}ek \cite{lason-michalek} proved that it holds for all strongly base orderable matroids, in particular for all gammoids, and thus also for all transversal and all cotransversal matroids; moreover, these authors also showed that White's strong conjecture is true ``up to saturation'' for all matroids. Shibata \cite{Shibata} proved that both White's conjecture and White's strong conjecture are preserved under various matroid operations, including $2$-sums and series and parallel extensions. More recently, the strong version of White's conjecture was established for all paving matroids by Yu and Yuen in \cite{Yu-Yuen}, and it was also proved that the conjecture is preserved under circuit-hyperplane relaxations by Han, Micha{\l}ek and Weigert \cite{han-michalek-weigert}.  

\subsection*{Outline}

This paper is organized as follows. In Section \ref{basics} we review the necessary definitions and notation.  Section~\ref{sec:TOPCOM} is devoted to a method to enumerate all unimodular flag triangulations of a point configuration by  a new adaptation of symmetric lexicographic subset reverse search and our experience with it.  In Section \ref{idea-latticepts2triangs} we show our key idea that connects the lattice points inside dilations of the matroid polytope to the 1-skeleton of all unimodular triangulations. This allows us to setup and solve a SAT instance that was used to find the first counterexample.  In Section \ref{flaguni-implies-nonregular} we show that together flagness and unimodularity force the failure of regularity for the Fano matroid polytope, but in Section \ref{largermatroids} we discuss what happens with other matroids with more than seven elements and present some further observations. The Appendix contains an explicit example of a non-regular unimodular flag triangulation. 

\subsection*{Acknowledgements}

We are very grateful to Marijn Heule and Jos\'e Samper for inspiring exchanges. Selected calculations were performed using the \texttt{festus-cluster} of the Bayreuth Centre for High Performance Computing\footnote{\url{https://www.bzhpc.uni-bayreuth.de}}, funded by the Deutsche Forschungsgemeinschaft (DFG, German Research Foundation) -- 523317330.
The first and third author were partially supported by NSF grants 
DMS-2348578 and DMS-2434665. The second author is a member of the GNSAGA group of the Istituto Nazionale di Alta Matematica (INdAM).

\section{Preliminaries} \label{basics}

Throughout this paper we shall assume that the reader is acquainted with the basic terminology and notation used in matroid theory and polyhedral geometry. For background about matroids we refer to \cite{Welsh1976,Oxley,Ardila2018}, and for further details about polyhedra and triangulations we suggest
\cite{Ziegler,Sturmfels1996,DeLoeraRambauSantos}.

\begin{definition}[Matroid base polytope \cite{Edmonds2003}]
A \emph{matroid} is a pair $M=(E,\cB)$ where $E$ is a finite set and $\cB$ is a non-empty family of subsets of $E$, called bases, satisfying the basis-exchange axiom: for any $B_1,B_2\in\cB$ and $e\in B_1\setminus B_2$, there exists $f\in B_2\setminus B_1$ such that
\[
 (B_1\setminus\{e\})\cup\{f\}\in\cB.
\]
For $S\subseteq E$, let $\chi_S\in\{0,1\}^E$ be its indicator vector.  The \emph{base polytope} of $M$ is
\[
 P_M=\conv\{\chi_B:B\in\cB\}\subseteq\RR^E.
\]
\end{definition}

\begin{example}
    The \emph{Fano matroid} $F_7$ has ground set $E=\{0,1,\ldots,6\}$ and rank $3$.
    Its non-bases are the seven lines
    \[
    012,\quad 034,\quad 056,\quad 135,\quad 146,\quad 236,\quad 245.
    \]
    Equivalently, its bases are all $3$-subsets of $E$ except these seven triples.
    Thus $P_{F_7}$ has $28$ vertices. All vertices lie in the affine hyperplane
    \[
        x_0+x_1+\cdots+x_6=3,
    \]
    and since $F_7$ is connected, $\dim P_{F_7}=6$.

    The \emph{non-Fano matroid} $F_7^{-}$ is obtained from $F_7$ by relaxing any one
    of its seven lines. With the above labeling, let us relax the line $012$.
    Then $F_7^{-}$ has the same ground set and rank, and its non-bases are
    \[
    034,\quad 056,\quad 135,\quad 146,\quad 236,\quad 245.
    \]
    Equivalently, its bases are all $3$-subsets of $E$ except these six triples,
    so $P_{F_7^-}$ has $29$ vertices. Again all vertices lie in the hyperplane
    \[
        x_0+x_1+\cdots+x_6=3,
    \]
    and since $F_7^{-}$ is connected, $\dim P_{F_7^-}=6$.

    The following incidence diagrams make the relation between them clear:
    $F_7^{-}$ is obtained from $F_7$ by removing the circular line through $0,1,2$.

    \begin{center}
    \begin{minipage}{0.45\textwidth}
        \centering
        \begin{tikzpicture}[scale=0.9]
            \tikzstyle{edges}=[thick];
            \coordinate (p3) at (0,2.6);
            \coordinate (p4) at (-2.25,-1.3);
            \coordinate (p5) at (2.25,-1.3);
            \coordinate (p0) at (-1.125,0.65);
            \coordinate (p1) at (1.125,0.65);
            \coordinate (p2) at (0,-1.3);
            \coordinate (p6) at (0,0);

            \draw[thick] (p3)--(p4); 
            \draw[thick] (p3)--(p5); 
            \draw[thick] (p4)--(p5); 
            \draw[thick] (p3)--(p2); 
            \draw[thick] (p4)--(p1); 
            \draw[thick] (p5)--(p0); 
            \draw[thick] (p6) circle[radius=1.3]; 

            \fill (p0) circle (2pt); \node[left]  at (p0) {$0$};
        \fill (p1) circle (2pt); \node[right] at (p1) {$1$};
        \fill (p2) circle (2pt); \node[below] at (p2) {$2$};
        \fill (p3) circle (2pt); \node[above] at (p3) {$3$};
        \fill (p4) circle (2pt); \node[left]  at (p4) {$4$};
        \fill (p5) circle (2pt); \node[right] at (p5) {$5$};
        \fill (p6) circle (2pt); \node[above right,yshift=2pt] at (p6) {$6$};
        \end{tikzpicture}

        \smallskip
        The Fano matroid $F_7$.
    \end{minipage}
    \hfill
    \begin{minipage}{0.45\textwidth}
        \centering
        \begin{tikzpicture}[scale=0.9]
            \tikzstyle{edges}=[thick];
            \coordinate (p3) at (0,2.6);
            \coordinate (p4) at (-2.25,-1.3);
            \coordinate (p5) at (2.25,-1.3);
            \coordinate (p0) at (-1.125,0.65);
            \coordinate (p1) at (1.125,0.65);
            \coordinate (p2) at (0,-1.3);
            \coordinate (p6) at (0,0);

            \draw[thick] (p3)--(p4); 
            \draw[thick] (p3)--(p5); 
            \draw[thick] (p4)--(p5); 
            \draw[thick] (p3)--(p2); 
            \draw[thick] (p4)--(p1); 
            \draw[thick] (p5)--(p0); 

            \fill (p0) circle (2pt); \node[left]  at (p0) {$0$};
        \fill (p1) circle (2pt); \node[right] at (p1) {$1$};
        \fill (p2) circle (2pt); \node[below] at (p2) {$2$};
        \fill (p3) circle (2pt); \node[above] at (p3) {$3$};
        \fill (p4) circle (2pt); \node[left]  at (p4) {$4$};
        \fill (p5) circle (2pt); \node[right] at (p5) {$5$};
        \fill (p6) circle (2pt); \node[above right,yshift=2pt] at (p6) {$6$};
        \end{tikzpicture}

        \smallskip
        The non-Fano matroid $F_7^{-}$.
    \end{minipage}
    \end{center}
\end{example}
\smallskip

A \emph{triangulation} of a point configuration $A$ \cite[Def.~2.3.1]{DeLoeraRambauSantos} is a set $\Delta$ of simplices with vertices in~$A$ with
\begin{enumerate}
\item[(CP)] $\Delta$ is closed under taking faces.
\item[(UP)] $\bigcup_{\sigma \in \Delta} \sigma = \conv(A)$
\item[(IP)] $\relint(\sigma) \cap \relint(\tau) = \emptyset$ for all $\sigma, \tau \in \Delta$.
\end{enumerate}
Here, $\relint \sigma$ is the \emph{relative interior} of~$\sigma$, i.e., $\sigma$ minus the union of its proper faces.  Two simplices with vertices in~$A$ are said to \emph{intersect properly} if they, together with the set of all their faces, satisfy (IP).  A set of simplices with vertices in~$A$ whose union contains $\conv(A)$ is said to \emph{cover}~$A$.  With this, a triangulation of~$A$ is a simplicial complex with vertices in~$A$ so that all its maximal simplices intersect properly and cover $\conv(A)$.

In any triangulation $\Delta$ of~$A$, the following holds by (IP): For each point~$x \in \conv(A)$ there is a unique simplex $\sigma \in \Delta$ with $x \in \relint(\sigma)$.  This unique simplex is called \emph{the carrier of~$x$ in~$\Delta$} \cite[specialization of Def.~2.1.22]{DeLoeraRambauSantos}.

A full-dimensional lattice simplex is \emph{unimodular} if its normalized lattice volume is $1$ \cite{haase2021existence}.  In that case, by Cramer's rule, each lattice point is a unique integer affine combination of its vertices.  A triangulation is \emph{unimodular} if all its maximal simplices are unimodular.  A triangulation is \emph{regular} if it is the lower envelope complex that arises when a height is assigned to each vertex \cite[Def.~2.2.3]{DeLoeraRambauSantos}.  A triangulation is \emph{flag} if it is the clique complex of its graph or, equivalently, if every minimal non-face has two vertices \cite{athanasiadis2011some}. 

\begin{lemma}
    If a polytope $P$ has a regular unimodular flag triangulation, then any face of $F\subseteq P$ has a regular unimodular flag triangulation. 
\end{lemma}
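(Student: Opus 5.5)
The plan is to take a regular unimodular flag triangulation $\Delta$ of $P$ (as a lattice point configuration, vertices among the lattice points of $P$) and restrict it to the face $F$. Concretely, set
\[
\Delta|_F=\{\sigma\in\Delta : \sigma\subseteq F\}.
\]
We then verify that $\Delta|_F$ is a triangulation of $F$ (with vertices the lattice points of $F$ that are used) and check the three properties one at a time.

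\emph{Triangulation.} Closure under faces (CP) and proper intersection (IP) are inherited directly from $\Delta$. For the covering property (UP), take $x\in F$ and its carrier $\sigma\in\Delta$, so $x\in\relint(\sigma)$. Choose a linear functional $\ell$ that is maximized on $P$ exactly at $F$. Since $x$ is a strictly positive convex combination of the vertices of $\sigma$, and all those vertices satisfy $\ell\le \max_P\ell$ with equality at $x$, every vertex of $\sigma$ lies in $F$. Hence $\sigma\subseteq F$, and the simplices of $\Delta|_F$ cover $F$. This is the standard fact that a triangulation restricts to a triangulation of every face.

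\emph{Regularity.} Let $h$ be a height function inducing $\Delta$. The restriction $h|_{F\cap A}$ induces $\Delta|_F$, because the lower envelope over $F$ is the restriction of the lower envelope over $P$. To see this, take a lower facet $\hat\sigma$ of the lifted configuration over $P$, with supporting affine function $a$, so that $h\ge a$ on $A$ with equality exactly on $\sigma$. Then $h|_F\ge a|_F$ with equality exactly on $\sigma\cap F$. This shows that the cells of the regular subdivision of $F$ induced by $h|_F$ are exactly the faces $\sigma\cap F$ of the simplices of $\Delta$. These are faces of simplices, hence simplices of $\Delta|_F$, and the maximal ones cover $F$ by the previous step.

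\emph{Unimodularity.} This is the step I expect to need the most care, since one has to say relative to which lattice $F$ is measured. The natural convention is the lattice $\mathrm{aff}(F)\cap\ZZ^E$. Let $\tau$ be a maximal simplex of $\Delta|_F$. Then $\tau$ is a face of some maximal simplex $\sigma$ of $\Delta$. Because $\sigma$ is unimodular, its edge vectors from a fixed vertex form a basis of the lattice $\mathrm{aff}(P)\cap\ZZ^E$ (after translation). A subset of a lattice basis spans a saturated sublattice, i.e.\ the edge vectors of $\tau$ form a basis of $\mathrm{lin}(\tau-v)\cap \ZZ^E$. Since $\dim\tau=\dim F$, this is exactly the lattice of $\mathrm{aff}(F)$. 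Hence $\tau$ is unimodular; equivalently, every lattice point of $\mathrm{aff}(F)$ is an integer affine combination of the vertices of $\tau$, which follows from the Cramer's rule statement for $\sigma$ with vanishing coefficients outside $\tau$.

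\emph{Flagness.} Let $S$ be a clique in the graph of $\Delta|_F$. Its edges are edges of $\Delta$, so $S$ is a clique in the graph of $\Delta$. By flagness of $\Delta$, $S$ spans a simplex of $\Delta$, and this simplex lies in $F$ since its vertices do. Thus $\Delta|_F$ is the clique complex of its graph, which gives flagness and completes the plan.
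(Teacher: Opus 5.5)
Your proof is correct and follows the same route as the paper: you restrict $\Delta$ to $\Delta|_F=\{\sigma\in\Delta:\sigma\subseteq F\}$, restrict the height function to get regularity, and use that $\Delta|_F$ is an induced subcomplex (your clique argument) to get flagness. Your version is more careful than the paper's in two places: you verify the covering property, and you verify unimodularity with respect to the lattice $\mathrm{aff}(F)\cap\ZZ^E$, using that part of a lattice basis spans a saturated sublattice, whereas the paper dismisses unimodularity with ``$\Delta|_F\subseteq\Delta$''.
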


\begin{proof}
    Fix a face $F\subseteq P$. Any triangulation $\Delta$ of $P$ induces a triangulation of $F$ by considering the set $\Delta|_F$ consisting of the simplices in $\Delta$ strictly contained in $F$. Regularity is inherited: if $\Delta$ comes from a height function, then restricting the same height function to $F$ induces the triangulation $\Delta|_F$. Unimodularity is also inherited, since $\Delta|_F \subseteq \Delta$. Finally, flagness is inherited, because $\Delta|_F$ is an induced subcomplex of $\Delta$, and induced subcomplexes of flag complexes are flag. 
\end{proof}

\begin{proposition}
    Let $M$ be a matroid whose base polytope admits a regular unimodular flag triangulation. Then, every minor $N$ of $M$ has a base polytope admitting a regular unimodular triangulation.
\end{proposition}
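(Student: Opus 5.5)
Note: the statement says "regular unimodular triangulation" in the conclusion, but the intended (and stronger) claim is presumably "regular unimodular flag triangulation". I will prove the flag version; the weaker version then follows trivially.

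The plan reduces the statement to the preceding Lemma (faces inherit regular unimodular flag triangulations) plus two standard facts about matroid base polytopes. Every minor is obtained by a sequence of single-element deletions and contractions, so it suffices to treat $N=M\setminus e$ and $N=M/e$. The degenerate cases are handled separately: $e$ a loop (then $M\setminus e=M/e$) or a coloop (then $M/e=M\setminus e$). In these cases $P_N$ is the image of $P_M$ under the coordinate projection forgetting $x_e$. Since $x_e$ is constant on $P_M$, this projection is a lattice isomorphism onto its image, so the triangulation transfers verbatim.

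For $e$ neither a loop nor a coloop, I would use the face description of minors. The set $F_0=P_M\cap\{x_e=0\}$ is a face of $P_M$ (since $x_e\ge 0$ on $P_M$), and its vertices are exactly the $\chi_B$ with $e\notin B$, i.e. the bases of $M\setminus e$. Likewise $F_1=P_M\cap\{x_e=1\}$ is a face (since $x_e\le 1$), whose vertices are $\chi_B$ with $e\in B$, i.e. $\chi_{B'}+\mathbf{e}_e$ for $B'$ a basis of $M/e$. Thus $F_0$ is $P_{M\setminus e}\times\{0\}$ and $F_1$ is $P_{M/e}\times\{1\}$, after embedding $\RR^{E\setminus e}$ in $\RR^E$.

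The affine map deleting coordinate $e$ (after subtracting $\mathbf e_e$ in the second case) is a bijection between the lattice points of the affine hulls involved. More precisely, it identifies $\ZZ^E\cap\{x_e=c\}$ with $\ZZ^{E\setminus e}$. So it preserves normalized volumes, lattice points, regularity (heights transported) and flagness (a purely combinatorial property). By the Lemma, $F_0$ and $F_1$ carry regular unimodular flag triangulations, hence so do $P_{M\setminus e}$ and $P_{M/e}$. Induction on the number of deleted or contracted elements finishes the proof.

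The main point needing care is unimodularity. Normalized volume must be measured in the lattice $\operatorname{aff}(F)\cap\ZZ^E$, and the Lemma's "unimodularity is inherited" should be read with respect to that lattice. This holds because a face of a unimodular simplex is unimodular relative to its own affine lattice: the vertex differences of a unimodular simplex extend to a basis of the ambient lattice, so any subset spans a saturated sublattice. I would state this explicitly, and check that the projection maps $\operatorname{aff}(F_c)\cap\ZZ^E$ onto $\operatorname{aff}(P_N)\cap\ZZ^{E\setminus e}$. That check is immediate since the map is a coordinate deletion on a slice $x_e=c$.

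Dual closure, mentioned earlier in the paper, is not needed here. If wanted, it follows from the lattice isomorphism $x\mapsto \mathbf 1-x$, which maps $P_M$ onto $P_{M^*}$.
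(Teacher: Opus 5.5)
Your proof is correct and takes the paper's route. You reduce to single-element deletions and contractions, handle loops and coloops by dropping the constant coordinate $x_e$, and otherwise identify $P_{M\setminus e}$ and $P_{M/e}$ with the faces $P_M\cap\{x_e=0\}$ and $P_M\cap\{x_e=1\}$, to which the face lemma applies. Your reading that the conclusion should say ``flag'' matches what the paper's proof actually establishes, and your check that unimodularity passes to faces relative to each face's own affine lattice fills in a step the paper's lemma only asserts.
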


\begin{proof}
    Consider an element $e$ of the ground set of $M$. If $e$ is a loop or a coloop, then $M/e = M\setminus e$. If $e$ is a loop, the base polytope of $M$ is the base polytope of $M/e$ with an extra zero coordinate in the position labelled by $e$, thus the regular unimodular flag triangulation of $M$ gives directly a regular unimodular flag triangulation of $M/e=M\setminus e$. If $e$ is a coloop, the proof is completely analogous.

    Now, assume that $e$ is neither a loop nor a coloop, then the base polytope of $M\setminus e$ is identified with the intersection of $\mathcal{P}(M)$ with the hyperplane $\{x_e = 0\}$. Similarly, $\mathcal{P}(M/e)$ corresponds to the intersection $\mathcal{P}(M) \cap \{x_e = 1\}$. These two polytopes are faces of $\mathcal{P}(M)$ and thus, by the previous lemma, the regular unimodular flag triangulation of $\mathcal{P}(M)$ induces flag regular unimodular triangulations in them. Since all minors of $M$ are obtained by a sequence of deletions and contractions, this shows that in fact all minors of $M$ possess the same property.
\end{proof}

The above proposition implies that the class of matroids admitting a regular unimodular flag triangulation is closed under minors. Since the base polytope of any matroid $M$ is unimodularly equivalent to the base polytope of its dual $M^*$ via the map $\mathbb{R}^E\to \mathbb{R}^E$ given by $x \mapsto (1,\ldots,1) - x$, then the property of having regular unimodular flag triangulations is also closed under matroid duality. 

\begin{corollary}
    There exists a (possibly infinite) duality-closed list of matroids $\mathcal{U}$ with the following property: the base polytope of a matroid $M$ has no regular unimodular flag triangulation if and only if $M$ does not have minors belonging to $\mathcal{U}$.
\end{corollary}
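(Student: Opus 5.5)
The plan is the standard excluded-minor argument, applied to the class $\mathcal{C}$ of matroids whose base polytope admits a regular unimodular flag triangulation. I read the statement in its intended form: $M\in\mathcal{C}$ if and only if $M$ has no minor isomorphic to a member of $\mathcal{U}$. Equivalently, $P_M$ has \emph{no} regular unimodular flag triangulation exactly when $M$ \emph{does} have a minor in $\mathcal{U}$; the final ``does not'' in the displayed wording should be read this way.

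\textbf{Properties of $\mathcal{C}$.} I need three facts.
\begin{enumerate}
\item[(a)] $\mathcal{C}$ is closed under isomorphism. An isomorphism of matroids induces a coordinate permutation of $\RR^E$. This is a lattice-preserving linear map, so it transports regular unimodular flag triangulations: heights, normalized volumes and the clique-complex condition are all preserved.
\item[(b)] $\mathcal{C}$ is closed under minors. This comes from the preceding proposition. Its statement only says ``regular unimodular,'' but its proof (via the lemma on faces) actually yields flag triangulations of $P_{M\setminus e}$ and $P_{M/e}$, so I will cite the proof. The loop and coloop cases are also fine: there $P_M$ is a lattice-equivalent copy of $P_{M/e}$, obtained by inserting a constant coordinate.
\item[(c)] $\mathcal{C}$ is closed under duality. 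This follows from the unimodular map $x\mapsto(1,\ldots,1)-x$ sending $P_M$ onto $P_{M^*}$, which is noted after the proposition.
\end{enumerate}

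\textbf{Definition of $\mathcal{U}$.} Let $\mathcal{U}$ be a set of representatives of the isomorphism classes of \emph{minor-minimal} matroids not in $\mathcal{C}$. These are the matroids $N\notin\mathcal{C}$ such that every proper minor of $N$ lies in $\mathcal{C}$.

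\textbf{The equivalence.} First, if $M\in\mathcal{C}$, then by (a) and (b) every minor of $M$ lies in $\mathcal{C}$, so no minor of $M$ is isomorphic to a member of $\mathcal{U}$.

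Conversely, suppose $M\notin\mathcal{C}$. The set of minors of $M$ not in $\mathcal{C}$ is finite, since a matroid has finitely many minors, and it is nonempty, since it contains $M$. Pick one with the smallest ground set. It is minor-minimal outside $\mathcal{C}$, so it is isomorphic to a member of $\mathcal{U}$. The point is that minimality needs no well-quasi-ordering argument here, because each matroid has only finitely many minors.

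\textbf{Duality-closedness of $\mathcal{U}$.} This is the one step needing a small check. I will use that $(N/X\setminus Y)^*=N^*\setminus X/Y$, so the proper minors of $N^*$ are exactly the duals of the proper minors of $N$. Now take $N\in\mathcal{U}$. By (c), $N^*\notin\mathcal{C}$. Every proper minor of $N^*$ is the dual of a proper minor of $N$, which lies in $\mathcal{C}$, so by (c) the proper minor of $N^*$ lies in $\mathcal{C}$ as well. Hence $N^*$ is minor-minimal outside $\mathcal{C}$ and is isomorphic to a member of $\mathcal{U}$.

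I expect no real obstacle. The only points requiring care are the isomorphism invariance in (a) and the correct reading of the statement noted above.
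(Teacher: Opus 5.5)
Your proposal is correct and follows the paper's approach. The paper simply asserts that the corollary follows from closure under minors (the face lemma together with the deletion/contraction proposition, whose proof does yield flag triangulations) and closure under duality via $x\mapsto(1,\ldots,1)-x$; your construction of $\mathcal{U}$ as minor-minimal non-members, your minimality argument and your up-to-isomorphism duality check just spell out that standard step, and your corrected reading of the statement (no regular unimodular flag triangulation exactly when $M$ \emph{has} a minor in $\mathcal{U}$) is the intended one.
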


\section{A method for enumerating all unimodular flag triangulations}
\label{sec:TOPCOM}

One obvious approach to decide the existence of a regular unimodular flag triangulation for a given point configuration is to enumerate \emph{all} triangulations first and search for a regular unimodular triangulation in the result. This has been tried unsuccessfully for example by \cite{Hayase2022}. The latest and currently fastest method for enumerating \emph{all} triangulations of a general point configuration $P$ up to symmetry is \emph{a symmetric lexicographic subset reverse search (symLSRS)} from \cite{Rambau:TOPCOM-preprint:2026}. 

Here is a short description of symLSRS.  The algorithm traverses an enumeration tree in a depth-first search manner.  The tree, rooted at the empty set, contains as nodes all \emph{partial triangulations} (i.e., subsets of maximal simplices that are intersect pairwise properly) that are lex-minimal (= lexicographically minimal) in their respective symmetry orbits.  For a node $T$, an outgoing edge corresponds to the addition of an \emph{admissible} simplex to~$T$.  A maximal simplex is admissible for~$T$ if it is pairwise intersecting properly with $T$ and lex-larger (= lexicographically larger) than each simplex in~$T$ so far.  Whenever a new node is not lex-minimal in its orbit, the node is discarded, and the algorithm backtracks. A partial triangulation is a complete triangulation if and only if all interior facets of maximal simplices in~$T$ are \emph{covered} (i.e. are contained in exactly two distinct maximal simplices, the \emph{pseudo-manifold property}).

Not every partial triangulation has a completion to a full triangulation by only admissible simplices (a \emph{right-completion}). Eventually, this can be detected when there are uncovered interior facets but no admissible simplex anymore.  Since there are many more not-right-completable partial triangulations than triangulations, a pruning criterion is crucial. Thus, the following necessary condition for right-completability is checked: If the lex-minimal uncovered facet is lex-smaller than the lex-minimal simplex admissible for~$T$, then $T$ cannot be right-completed to a triangulation.  With this blazingly fast so-called \emph{lex-pruning}, dead-ends can be detected much earlier.
The experiments in \cite{Rambau:TOPCOM-preprint:2026} show that with symLSRS much larger instances can be enumerated than before, where the flip-graph of triangulations was usually explored.  Another advantage is that restrictions like unimodularity can be implemented in symLSRS by only considering unimodular simplices as admissible simplices from the beginning.

Since, at present, the connectivity of the flip-graph of triangulations of matroid base polytopes is an open problem, flip-based explorations cannot decide the existence of a triangulation with certain properties of a matroid polytope.  Therefore, symLSRS in TOPCOM is currently the only implementation to find a conclusive answer by complete enumeration.

Preliminary computations for the Fano matroid polytope with $28$ points in rank~$7$ have shown that an enumeration of all unimodular triangulations and filtering by the flag and regularity properties ex-post seemed to take too long: After several days of HPC computations on 192 threads, it could be observed that the number of open nodes in each daily checkpoint was still increasing rapidly -- a sign for a very long way to go.  Thus, the computation was interrupted after $319{,}712{,}733$ symmetry classes of unimodular triangulations had been found.

Instead, a new method was developed to incorporate the flag property into the enumeration process.  While unimodularity was already considered and favorably tested in \cite{Rambau:TOPCOM-preprint:2026} (it makes enumeration faster by reducing the number of admissible simplices), it was not discussed yet how exactly the flag property could be handled in symLSRS.  The new approach is based on a criterion from \cite{Betre+etal:PureSimplicialComplexes-arxiv:2024}: A simplicial complex~$\Delta$ is flag if and only if each union of the pairwise intersections of three distinct maximal simplices is a face in~$\Delta$.  Call the cliques in this criterion the \emph{critical cliques of~$\Delta$}, and call the criterion the \emph{critical-cliques criterion}.

The critical-cliques criterion can be checked incrementally during symLSRS as follows:  For a partial triangulation~$T$, let $U(T)$ be a set of all \emph{uncovered critical cliques} in the edge graph of~$T$, i.e., critical cliques that are not (yet) faces in~$T$.  Now assume that a new admissible simplex~$S$ is added to~$T$ to obtain~$T'$.  Then, the following is true.
\begin{lemma}[Flag-Pruning]
    Let $T$ be a partial triangulation with uncovered critical cliques $U(T)$ and let $T' = T \cup \{S\}$ for an admissible simplex~$S$.  Let $E'$ be the set of admissible simplices for~$T'$. Then, the following algorithm detects that $T$ is not right-completable to a flag triangulation (``RETURN(false)") or that $T$ might be right-completable (``RETURN(true, $U'$)"), where $U' = U'(T')$.
    \begin{enumerate}
    \item Initialize $U' = \emptyset$.
    \item For each $C \in U(T)$:
    \begin{enumerate}
    \item If $C$ is a face of~$S$, then continue with the next $C$.
    \item If $C$ is lex-smaller than the lex-minimal simplex in~$E'$, then RETURN(false)
    \item Else add $C$ to~$U'$.
    \end{enumerate}
    \item For each $S' \neq S''$ in $T$:
    \begin{enumerate}
    \item Let $C$ be the critical clique for $S, S', S''$.
    \item If $C$ has more than rank many elements or $C$ is lex-smaller than the lex-minimal simplex in~$E'$, then RETURN(false).
    \item Else if $C$ is a face in $T'$, then continue with the next~$C'$.
    \item Else add $C$ to $U'$
    \end{enumerate}
    \item RETURN(true, $U'$)\qed
    \end{enumerate}
\end{lemma}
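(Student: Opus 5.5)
The lemma is a soundness statement about a pruning rule. I would prove it by showing that whenever the algorithm returns false, no right-completion of $T'$ to a flag triangulation exists. The case of returning true needs nothing, since ``might be right-completable'' is a vacuous claim. I would additionally check that the returned $U'$ equals $U(T')$, so that the invariant is maintained inductively.

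The basic tool is the critical-cliques criterion of \cite{Betre+etal:PureSimplicialComplexes-arxiv:2024}, together with the structure of symLSRS. Suppose $\Delta\supseteq T'$ is a right-completion, so $\Delta\setminus T'\subseteq E'$ and every simplex added later is lex-larger than all simplices of $T'$. Then:
\begin{itemize}
\item every critical clique of $T'$ (built from three distinct simplices of $T'$) is still a critical clique of $\Delta$, because pairwise intersections do not change when simplices are added;
\item if $\Delta$ is flag, each such clique $C$ must be a face of $\Delta$.
\end{itemize}

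First I verify the invariant on $U'$. Every critical clique of $T'$ comes either from a triple inside $T$, so it lies in $U(T)$ or is already a face of $T$, or from a triple containing $S$, which is handled in step (3). A clique in $U(T)$ becomes covered exactly when it is a face of some simplex of $T'$. Since it was not a face of $T$, it is covered iff it is a face of $S$, which is step 2(a). So $U'$ collects precisely the uncovered critical cliques of $T'$.

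Second, the two rejection rules.
\begin{itemize}
\item \emph{Size rule.} A face of a triangulation of a point configuration has at most $\dim+1$ vertices, which is the rank. A critical clique with more elements therefore can never be a face, so $\Delta$ cannot be flag.
\item \emph{Lex rule.} Suppose $C$ is uncovered in $T'$ yet must become a face of $\Delta$. Then $C\subseteq S^*$ for some $S^*\in\Delta\setminus T'\subseteq E'$. Any superset of $C$ is lex-larger than or equal to $C$ in the paper's lex convention, where a prefix is smaller. Hence $S^*\ge C$, and so the lex-minimal element of $E'$ satisfies $\min E'\ge$ \dots\ but $S^*$ itself is only one element of $E'$, so this alone is not the contradiction.
\end{itemize}

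I expect the lex rule to be the main obstacle. The statement actually needed is that any simplex of $E'$ containing $C$ is at least $C$, while the hypothesis only says $C<\min E'$, and that inequality runs the wrong way for a contradiction. The intended argument, mirroring the lex-pruning for uncovered facets, must use the specific lex order of symLSRS. There, sets are compared via sorted vertex sequences and subsets compare in a particular way. The property I need is: if $C<\min E'$ and $C$ must be covered by a simplex of $E'$, then, because all later simplices are at least $\min E'>C$, $C$ is already an initial segment-type lower bound that no admissible simplex can realize. Concretely, I would first try to establish that a simplex containing $C$ is necessarily lex-smaller than or equal to some bound determined by $C$. This holds in colex-type or ``lex-minimal face'' orders, where a superset of $C$ containing $C$'s minimal elements is at most $C$ when $C$ is viewed as a padded sequence. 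I would fix that ordering convention explicitly and check the monotonicity, just as TOPCOM's existing lex-pruning for facets does.

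With that monotonicity in hand, an uncovered $C$ with $C<\min E'$ admits no admissible cover, so $\Delta$ cannot be flag, and RETURN(false) is correct. The step 2(b) case then follows identically, since $E'$ depends only on $T'$.
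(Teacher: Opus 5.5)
Your skeleton is sound. A critical clique of $T'$ stays a critical clique of every completion $\Delta\supseteq T'$ and is a clique of its graph, so it must be a face if $\Delta$ is flag. The inclusion $\Delta\setminus T'\subseteq E'$ holds because admissibility for a larger partial triangulation implies admissibility for $T'$. Your argument that $U'=U(T')$ and your size rule are also correct.

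The gap is the lex rule, which is the real content of the lemma, and which you leave conditional on an unproved ``monotonicity''. The convention you attribute to the paper (the paper fixes none) cannot be the right one. With ``a prefix is smaller'' it is not even true that supersets of $C$ are lex-larger: compare $\{1,2,3\}$ with $\{1,3\}$. More generally, in any order in which some superset of $C$ exceeds $C$, the hypothesis $C<\min E'$ does not exclude an admissible cover. Your suggested repair via colex-type orders also points the wrong way, since in colex every proper superset of $C$ is larger than $C$.

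The missing idea is one line, and it is the fact that would make the facet lex-pruning of symLSRS valid under the convention adopted here (the paper does not specify the order). Compare vertex sets as increasing sequences, padding a shorter sequence with $+\infty$; equivalently, $A<B$ iff $\min(A\triangle B)\in A$. For $S^*\supsetneq C$ one gets $\min(S^*\triangle C)=\min(S^*\setminus C)\in S^*$, so $S^*<C$. Now suppose $C$ is not a face of $T'$ and $C<\min E'$. Any $S^*\in\Delta\setminus T'\subseteq E'$ containing $C$ would give $S^*<C<\min E'\le S^*$, which is absurd. So $C$ never becomes a face, and no right-completion is flag. This settles 2(b) and the lex part of 3(b).

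This argument, like yours, only covers cliques that are \emph{not} yet faces of $T'$. Step 3 as printed runs the lex test before the face test 3(c), and in that order RETURN(false) is not sound. Whenever $C$ coincides with a simplex of $T$, it is a face of $T'$, yet $C<S<\min E'$ in any order, since admissible simplices are lex-larger than everything already placed.

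This happens for the convex heptagon with cyclic vertex order $2,4,3,1,6,7,5$, triangulated by $\{1,3,7\},\{1,6,7\},\{2,3,4\},\{2,3,5\},\{3,5,7\}$. This triangulation is flag because the points are in convex position. Take $T=\{\{1,3,7\},\{1,6,7\},\{2,3,4\}\}$, $S=\{2,3,5\}$, $S'=\{1,3,7\}$ and $S''=\{1,6,7\}$. Then $C=\{1,3,7\}=S'$, and the literal algorithm returns false, although $T'$ completes by adding $\{3,5,7\}$. So you should say explicitly that you are proving the version in which 3(c) is checked before the lex test.
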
 
The advantage of this incremental flag-pruning is that only critical cliques based on the new simplex~$S$ are iterated over and that the iteration can be stopped as soon as a critical clique is found that cannot become a face in any right-completion of~$T'$.

Integrating flag-pruning with lex-pruning allows for quite fast enumeration of flag triangulations of any point configuration, coming from a lattice polytope or not.

The tiny experiments in Table~\ref{tab:flag-pruning} on a MacBook M1Max single-threaded show that for small examples the numbers of flag triangulations (up to symmetry) can be computed while visiting substantially fewer nodes with flag-pruning.  (Regularity checks were postponed because their enormous effort is dominating anything else.)

\begin{table}[htbp]
\begin{center}
\sffamily
\begin{tabular}{rlrrr}
\toprule
configuration & restriction & \# triang's & \# nodes & CPU-time/s\\
\midrule
$4$-cube & none            & 247,451 & 3,446,659 & 4.5\\
$4$-cube & flag            &  36,394 &   803,274 & 9.1\\
$4$-cube & flag-unimodular &  24,939 &   576,373 & 6.8\\
\midrule
$\Delta_6 \times \Delta_2$ & none            & 533,242 & 6,325,472 & 470\\
$\Delta_6 \times \Delta_2$ & flag            &  30,951 &   609,070 &  63\\
$\Delta_6 \times \Delta_2$ & flag-unimodular &  30,951 &   609,070 &  64\\
\bottomrule
\end{tabular}
\end{center}
\caption{Computational results for flag pruning in symLSRS}
\label{tab:flag-pruning}
\end{table}

Compared to the enumeration of all triangulations, the CPU-times for the enumeration of flag triangulations is larger for the $4$-cube and smaller for $\Delta_6 \times \Delta_2$ (where all triangulations are unimodular; the extra second is for checking all simplex volumes in preprocessing).  An explanation may be that about 15\% of the triangulations of the $4$-cube are flag, while only about 6\% of the triangulations of $\Delta_6 \times \Delta_2$ are flag.  Moreover, the number of simplices in a triangulation of the $4$-cube is larger than the number of simplices in a triangulation of $\Delta_6 \times \Delta_2$, which leads to more calls of flag-pruning on the path to a triangulation.

Would flag-pruning accelerate the enumeration of flag triangulations of the Fano matroid polytope compared to the enumeration of all triangulations (recall that this was too slow)?  After several days of HPC-computation time on 192 threads and over 14,873 non-regular unimodular flag triangulations found, it seems that this method also takes too much time to get to a final verdict for the Fano matroid polytope.  Most probably this is due to the large size of a unimodular triangulation with 232 simplices (see below). We pushed to the limit the enumeration approach.  However, the collected evidence gave us confidence to attempt to prove that regular unimodular flag triangulations of the Fano matroid polytope do not exist.

Thus, in the following section, an entirely new method is described that does not seek to enumerate regular unimodular flag triangulations at all: It is aimed directly at \emph{deciding the existence} of a regular unimodular flag triangulation for a given \emph{lattice polytope}.

\section{Boolean constraints for regular unimodular flag triangulations}

In this section we now present a very general fast method that, given any lattice polytope $P$ decides whether it admits a regular unimodular flag triangulation. The key ideas used were a lemma that connects the $1$-skeleton of a lattice polytope to the lattice points in the dilations, a way to encode the skeleta of triangulations with Boolean formulas and SAT solvers, and symmetry breaking arguments. Later  we will apply it to matroid base polytopes.

\subsection{From Lattice Points inside Dilations to the Skeleton of Unimodular Triangulations}\label{idea-latticepts2triangs}

The following is the crucial object for our successful method.
\begin{definition}[Level-$t$ Decomposition of Lattice Points]
    Let $P=\conv(A)$ be a lattice polytope with vertex set
    \[
        A=\{v_1,\ldots,v_N\}\subset\ZZ^d. 
    \]
    For $t\ge 1$ and $y\in tP\cap\ZZ^d$, a \emph{level-$t$ decomposition of $y$} is an equality
    \[ 
        y=a_1v_{i_1}+\cdots+a_rv_{i_r},\qquad
        a_j\in\ZZ_{>0},\qquad \sum_{j=1}^r a_j=t,
    \]
    where the vertices $v_{i_1},\ldots,v_{i_r}$ with $i_1, \dots, i_r \in \{1, \dots, N\}$ are pairwise distinct.  The \emph{support} of this decomposition is
    \[
        F=\{v_{i_1},\ldots,v_{i_r}\}.
    \]
    Denote by $\cF_t(y)$ the set of all supports of level-$t$ decompositions of~$y$.
\end{definition}

To begin, here is the pseudocode of the algorithm to find a regular unimodular flag triangulation.

\begin{algorithm}[htbp]
\caption{SAT for regular unimodular flag triangulations}
\label{alg:carrier-regularity}
\begin{algorithmic}[1]
\Require $P$ a lattice polytope, $D \in \mathbb{N}$
\For{\(t=2,3,\ldots,D\)}
    \State Compute the sets \(\mathcal F_t(y)\) for all relevant lattice points
    \(y\in tP\cap\mathbb Z^d\).

    \For{each relevant lattice point \(y\in tP\cap\mathbb Z^d\)}
        \State Encode the \emph{carrier condition} 
        \State ``exactly one \(F\in\mathcal F_t(y)\) is a clique'' (see below)
        \State as a Boolean constraint in edge variables.
    \EndFor
\EndFor

\State Let \(\Phi\) be the resulting SAT formula.

\While{\(\Phi\) is satisfiable}
    \State Let \(G\) be a candidate graph satisfying these clauses.
    \State Test the necessary regularity inequalities for \(G\).

    \If{the inequalities are feasible}
        \State \textbf{return} \(G\).
    \Else
        \State Extract a subset of edges that is incoherent with regularity.
        \State Add the corresponding blocking clause (up to symmetry) to \(\Phi\).
    \EndIf
\EndWhile

\State \textbf{return} UNSAT.
\end{algorithmic}
\end{algorithm}

\FloatBarrier

If Algorithm \ref{alg:carrier-regularity} returns UNSAT, then no regular unimodular flag triangulation exists.  If it returns $G$, its clique complex must be checked independently as a geometric triangulation.

If $D = 2$, the SAT formula is independent of flagness; if it is unsatisfiable, then no unimodular triangulation exists. We now explain each part of the algorithm in detail.  

The following lemma is reminiscent of the fact that from each circuit of a point configuration only its positive part or its negative part can form a face in the same triangulation, not both.  However, beyond \emph{preventing} faces if other faces are present in a triangulation, the following lemma also \emph{enforces} faces if other faces are not present in a triangulation.
We wish to stress the strong interdependence between a unimodular triangulation of a lattice polytope on the one hand and its lattice points and its Ehrhart function on the other hand.  This has already been reflected, e.g., in \cite{SamPayne2008} and in \cite[Theorem 5.5]{MaclaganThomas2002}.

\begin{lemma}[Carrier Lemma for Unimodular Triangulations of Lattice Polytopes]\label{lem:carrier}
    Let $P$ be a lattice polytope and let $\Delta$ be a unimodular triangulation of $P$.  Then, for all $t \ge 1$ and all $y \in tP \cap \ZZ^d$, $\Delta$ contains the convex hull of exactly one support face $F \in \mathcal{F}_t(y)$ of a level-$t$ decomposition of~$y$.
\end{lemma}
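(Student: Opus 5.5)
The plan is to prove existence and uniqueness separately. Both parts rest on the carrier of the rescaled point $y/t \in P$ in $\Delta$. Throughout, I assume the setting implicit in the statement: $\Delta$ uses only points of $A$ as vertices. Unimodularity is measured with respect to the lattice $\ZZ^d$ intersected with the affine hull of $P$, which is the relevant situation for matroid base polytopes, whose only lattice points are their vertices.

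\emph{Existence.} Let $y\in tP\cap\ZZ^d$. Then $y/t\in P$, so by (UP) there is a maximal simplex $\sigma=\conv\{w_0,\dots,w_k\}\in\Delta$ with $y/t\in\sigma$. I homogenize: consider the vectors $(w_j,1)\in\ZZ^{d+1}$ and the lattice point $(y,t)$.

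Unimodularity of $\sigma$ means that $w_1-w_0,\dots,w_k-w_0$ form a basis of the lattice parallel to the affine hull. Equivalently, the vectors $(w_0,1),\dots,(w_k,1)$ form a basis of the lattice $L=\ZZ^{d+1}\cap\operatorname{lin}\{(a,1):a\in A\}$. This is the Cramer's-rule remark in the preliminaries, applied in the cone over $\sigma$.

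Since $(y,t)\in L$ and lies in the cone spanned by the $(w_j,1)$, it has a unique expression $(y,t)=\sum_j a_j (w_j,1)$. Here the $a_j$ are integers, because the $(w_j,1)$ form a lattice basis. They are also nonnegative, because the point lies in the cone. The last coordinate gives $\sum_j a_j=t$. Discarding the zero coefficients yields a level-$t$ decomposition of $y$ whose support $F=\{w_j: a_j>0\}$ is a vertex set of a face of $\sigma$. By (CP), $\conv F\in\Delta$, so at least one support face lies in $\Delta$.

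\emph{Uniqueness.} Suppose $F,F'\in\mathcal F_t(y)$ with $\conv F,\conv F'\in\Delta$. First I note that $\conv F$ is a genuine simplex with vertex set $F$: it is a face of the simplicial complex $\Delta$ whose vertices are the points of $F$, and these points are affinely independent because they are among the vertices of a simplex of $\Delta$.

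Dividing the decomposition $y=\sum a_j v_{i_j}$ by $t$ writes $y/t$ as a convex combination of the points of $F$ with all coefficients strictly positive. Hence $y/t\in\relint\conv F$, and likewise $y/t\in\relint\conv F'$. By (IP), the carrier of $y/t$ in $\Delta$ is unique, so $\conv F=\conv F'$. Since a simplex determines its vertex set, $F=F'$.

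\emph{Main obstacle.} I expect the only delicate step to be the lattice-basis claim used for integrality in the existence part. One must check that normalized volume $1$ of $\sigma$ indeed implies that the homogenized vertices generate all lattice points of the cone over $\sigma$, and not merely a sublattice of index $1$ in some coarser sense. I would handle this by a determinant argument. The index of $\bigoplus_j\ZZ(w_j,1)$ in $L$ equals the normalized volume of $\sigma$ relative to the affine lattice. This index is $1$ by hypothesis, so every lattice point of the cone is an integer combination of the $(w_j,1)$.
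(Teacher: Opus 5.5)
Your proof is correct and is essentially the paper's argument: existence comes from writing $y/t$ in a unimodular maximal simplex containing it, where the support of the coefficients is the carrier, and uniqueness comes from the uniqueness of the carrier of $y/t$, since every $G\in\mathcal F_t(y)$ places $y/t$ in the relative interior of $\conv G$. Your homogenization with the vectors $(w_j,1)$ is a cleaner justification of the integrality step, which the paper phrases loosely as an ``affine combination'' whose coefficients actually sum to $t$; also, the fact that $\conv F$ has vertex set exactly $F$ follows most directly from every point of $F$ being a vertex of $P$, rather than from the circular-sounding reason you give.
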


\begin{proof}
    The point $x := \frac{1}{t}\cdot y$ lies in~$P$.  Call $\tau$ the carrier of $x$ in~$\Delta$, i.e., $x \in \relint(\tau)$, and there is no other $\tau' \in \Delta$ with this property.  Let $F = \{v_{i_1},\ldots,v_{i_r}\}$ be the unique set of vertices of~$\tau = \conv(F)$ (the vertices of a polytope do not contain duplicate points).  Then, $x \in \relint(\tau)$ implies that $x$ has a representation as $x = \frac{1}{t} \cdot y = \lambda_1 v_{i_1} + \dots + \lambda_r v_{i_r}$ for unique positive barycentric coordinates $\lambda_1, \dots, \lambda_r$ with $\lambda_1 + \dots + \lambda_r = 1$.  Thus, $y = t\lambda_1 v_{i_1} + \dots + t\lambda_r v_{i_r}$.  This can be extended to $x = t\lambda_1 v_{i_1} + \dots + t\lambda_r v_{i_r} + t \cdot 0 \cdot v_{i_{r+1}} + \dots + t \cdot 0 \cdot v_{i_{d+1}}$ for some maximal simplex $\sigma = \conv(v_{i_1}, \dots, v_{i_r}, v_{i_{r+1}}, \dots, v_{i_{d+1}})$ in~$\Delta$ containing~$\tau$.  The vertices of~$\sigma$ are affinely independent, hence, this representation is unique.  Since $y$ is a lattice point and the vertices of~$\sigma$ (a full-dimensional unimodular simplex) generate the lattice via integer affine combinations, the (unique) representation of~$y$ as an affine combination of vertices of~$\sigma$ as above must have integer coefficients. Now, define $a_1 := t \lambda_1 > 0, \dots , a_r := t \lambda_r > 0$.  Then, $a_j \in \ZZ_{>0}$ for all $j = 1, \dots, r$, $y = a_1v_{i_1} + \dots + a_r v_{i_r}$, and $a_1 + \dots + a_r = t\lambda_1 + \dots + t\lambda_r = t(\lambda_1 + \dots + \lambda_r) = t$.  Any sum of positive integers adding up to~$t$ has at most $t$ summands. Thus, $F$ is the support of a level-$t$ decomposition of~$y$, $\abs{F} \le t$, and $\conv(F) = \tau$ is a face of~$\Delta$.
 
    If $G \in \mathcal{F}_t(y)$ with $F \neq G$ and $y = b_1 v_{j_1} + \dots + b_r v_{j_r}$ such that $b_1 + \dots + b_r = t$, then $x = \frac{1}{t} \cdot y = \frac{b_1}{t} v_{j_1} + \dots + \frac{b_r}{t} v_{j_r}$, where all coefficients are positive and sum to $\frac{t}{t} = 1$. In other words: $x \in \relint(\tau')$ with $\tau' = \conv(G) \neq \tau$.  Therefore, $\tau' = \conv(G)$ cannot be a face of~$\Delta$ by the uniqueness of the carrier of~$x$ in~$\Delta$.
\end{proof}

The lemma says that $tP$ and the lattice points in it detect possible and necessary carrier faces of a unimodular triangulation with at most $t$ vertices. Thus, level $2$ detects vertices and edges; level $3$ detects vertices, edges, and triangles; level $4$ detects vertices, edges, triangles, and tetrahedra, etc.

The flag condition is used to express these possible carrier faces using only the graph:
\[
 F\in T
 \quad\Longleftrightarrow\quad
 F\text{ is a clique in the one-skeleton of }T.
\]
Therefore, for a flag unimodular triangulation, the carrier lemma becomes the Boolean rule
\[
\boxed{
\text{for every }y\in tP\cap\ZZ^d,
\text{ exactly one }F\in\cF_t(y)\text{ is a clique.}
}
\tag{1}
\label{eq:carrier-rule}
\]

Again, note that level $2$ does not depend on flagness.  If $y=u+v$ with $u\ne v$ vertices, then $\frac{1}{2}\cdot y$ cannot have a vertex carrier, since a vertex of $P$ is extreme. Hence, the carrier must be an edge.  Thus, in every nontrivial midpoint fiber
\[
 \{\{u,v\}:u+v=y\}
\]
exactly one pair is an edge of the triangulation. 

\subsection{SAT encoding of flag unimodular triangulations through carriers}\label{sec:sat}

We encode \eqref{eq:carrier-rule} as a Boolean formula.  The variables describe the graph that would be the 1-skeleton of a unimodular flag triangulation.  The formula does not initially encode maximal simplices; these are recovered, if possible,
as cliques of the graph.

For every unordered pair \(\{i,j\}\) of vertices we introduce a Boolean
variable \(e_{ij}=e_{ji}\), interpreted as the assertion that
\(\{v_i,v_j\}\) is an edge.  A set \(F\) of vertices is a face of a flag
complex precisely when all pairs in \(F\) are edges.  Thus, for example,
\[
  \{i,j,k\}\text{ is a face}
  \quad\Longleftrightarrow\quad
  e_{ij}\wedge e_{ik}\wedge e_{jk}.
\]

Suppose, for instance, that a point \(y\in 3P\) has exactly two possible carriers: an edge \(uv\) and a triangle \(ijk\).  The carrier condition says that exactly one of these two possibilities occurs:
\[
\bigl(e_{uv}\wedge \neg(e_{ij}\wedge e_{ik}\wedge e_{jk})\bigr)
\vee
\bigl((e_{ij}\wedge e_{ik}\wedge e_{jk})\wedge \neg e_{uv}\bigr).
\]
In the implementation this is converted to CNF.

\begin{definition}
Fix a level bound $D\ge 2$.  A set of edges $E$ on the vertex set $A$ is called \emph{$D$-consistent} if rule \eqref{eq:carrier-rule} holds for every $2\le t\le D$ and every lattice point $y\in tP$ represented by vertices.  
\end{definition}

An equivalent SAT formulation of $D$-consistency is the conjunction of exactly-one clauses encoding rule \eqref{eq:carrier-rule} for the relevant lattice points. For any lattice point $y \in tP$, rule \eqref{eq:carrier-rule} is equivalent to the following Boolean constraint. For Boolean formulas \(\phi_1,\ldots,\phi_m\), let the exactly-one clause be
\[
\operatorname{EO}(\phi_1,\ldots,\phi_m)
:=
(\phi_1\vee\cdots\vee\phi_m)
\wedge
\bigwedge_{a<b}(\neg\phi_a\vee\neg\phi_b).
\label{eq:exactly-one-clause}
\]

For \(F\subseteq A\), set
\[
  \kappa_F:=\bigwedge_{\{u,v\}\subseteq F}e_{uv}.
\]
Thus \(\kappa_F\) asserts that \(F\) is a clique.  The carrier constraint \eqref{alg:carrier-regularity} for \(y\in tP\cap\mathbb Z^d\) is the Boolean formula
\[
  \operatorname{EO}\bigl(\kappa_F:F\in\mathcal F_t(y)\bigr).
\]

Every unimodular flag triangulation maps to a $D$-consistent edge set for every $D$.  On the other hand, a $D$-consistent edge set is only a necessary condition for the existence of a unimodular flag triangulation. 

\subsection{Ruling out regularity of unimodular flag triangulations}\label{flaguni-implies-nonregular}

We now introduce necessary conditions for regularity of a triangulation. Let
\(A=\{v_1,\ldots,v_N\}\) be the vertex set of a lattice polytope \(P\).
Recall a \emph{regular triangulation} is induced by a height vector
\[
  h=(h_1,\ldots,h_N)\in\mathbb R^N.
\]
Geometrically, one lifts \(v_i\) to \((v_i,h_i)\), takes the lower hull, and projects its lower faces back to \(P\). It is also well-known that regularity can be verified via a linear programming problem (see \cite{DeLoeraRambauSantos}). 

In our case the only regularity inequalities used in the certificate for \(P_{F_7}\) come from level \(t = 2\) in Algorithm \ref{alg:carrier-regularity}. Suppose that a level-$2$ fiber contains two decompositions of the same lattice point:
\[
  v_i+v_j=v_k+v_\ell .
\]
If the triangulation selects the edge \(\{i,j\}\) as the carrier of this midpoint,
then regularity forces this carrier to be lower than every other pair in the same fiber:
\[
  h_i+h_j<h_k+h_\ell .
\tag{2}
\]
Since only finitely many such strict inequalities occur, strict feasibility is
equivalent, after a rational perturbation and scaling, to feasibility with margin
\(1\):
\[
  h_i+h_j-h_k-h_\ell\le -1.
\label{eq:height-margin}
\tag{3}
\]

In Algorithm \ref{alg:carrier-regularity}, the search alternates between SAT and exact linear infeasibility. The SAT solver returns a graph satisfying the carrier clauses. From this graph we read the selected edge in every level-$2$ exactly-one clause and test \eqref{eq:height-margin} with a linear program (LP).  If the system is feasible, the graph survives this necessary regularity test.  If the LP is infeasible, we extract an additional Boolean constraint from a small set of
selected edges
\[
 C=\{(\phi_1,e_1),\ldots,(\phi_m,e_m)\},
\]
where each $\phi_i$ is a levele-2 carrier clause, and $e_i$ is the selected edge in $\phi_i$, such that $C$ already makes the height inequalities impossible.  We call $C$ a \emph{forbidden height pattern}. We add the blocking clauses corresponding to $C$ to the carrier clauses, run the SAT solver with the additional blocking clauses and repeat the loop until we either find a regular unimodular flag triangulation or we reach UNSAT.

\begin{definition}[Quadratic Coherence]
A \(D\)-consistent graph is \emph{quadratically coherent} if there exists a height vector \(h\in\mathbb R^N\) satisfying \((3)\) for every selected edge and every competing option in each carrier clause.
\end{definition}

\begin{remark}
Quadratic coherence is only a necessary condition for regularity. This is enough for our purpose: if no \(D\)-consistent graph is quadratically coherent, then no regular unimodular flag triangulation can exist.
\end{remark}

\begin{lemma}\label{lem:regular-implies-coherent}
Every regular unimodular flag triangulation gives a \(D\)-consistent and
quadratically coherent graph, for every \(D\ge 2\).
\end{lemma}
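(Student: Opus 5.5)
Let $\Delta$ be a regular unimodular flag triangulation of $P$ induced by a height vector $h\in\mathbb R^N$. Let $G$ be the $1$-skeleton of $\Delta$, and let $E$ be its edge set. We have to show that $E$ is $D$-consistent for every $D\ge 2$ and that $G$ is quadratically coherent. We treat these two properties separately.

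\emph{Step 1: $D$-consistency.} Fix $2\le t\le D$ and a lattice point $y\in tP$ that is represented by vertices. By the Carrier Lemma~\ref{lem:carrier}, $\Delta$ contains $\conv(F)$ for exactly one support $F\in\mathcal F_t(y)$. For a subset $F\subseteq A$ we have $\conv(F)\in\Delta$ if and only if $F$ is a face of $\Delta$. Since $\Delta$ is flag, it is the clique complex of $G$, so $F$ is a face if and only if $F$ is a clique of $G$, that is, if and only if $\kappa_F$ holds. Hence exactly one $\kappa_F$ with $F\in\mathcal F_t(y)$ is true, which is rule~\eqref{eq:carrier-rule}, and $E$ is $D$-consistent. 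One small point needs care. The carrier lemma speaks of $\conv(F)$ being a simplex of $\Delta$, and we must make sure this agrees with $F$ being a face of the abstract complex. This holds because every point of $A$ is a vertex of $P$, so distinct vertex sets give distinct simplices.

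\emph{Step 2: quadratic coherence.} We show that $h$ itself, after rescaling, is a witness. Consider a level-$2$ fiber with selected edge $\{i,j\}$ and a competing pair $\{k,\ell\}$, so that $v_i+v_j=v_k+v_\ell=y$ and $\{k,\ell\}\ne\{i,j\}$. Put $x=\tfrac12 y$. Because $\Delta$ is regular, it is the projection of the lower faces of the lifted configuration $\{(v_a,h_a)\}$. The segment $\conv(v_i,v_j)$ is a face of $\Delta$, so its lift lies in a lower facet of the lifted polytope. That facet is the graph of an affine function $\psi$ over the maximal simplex $\sigma\supseteq\{v_i,v_j\}$, and the height function $g_h$ of the lower envelope equals $\psi$ on $\sigma$.

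We now use the standard fact that in a regular triangulation every point $a\in A$ that is not a vertex of $\sigma$ satisfies the strict inequality $h_a>\psi(v_a)$ \cite{DeLoeraRambauSantos}. The inequality is strict because $\sigma$ is a lower facet of a \emph{triangulation}: no other lifted point lies on its supporting hyperplane. In our situation $\{k,\ell\}\neq\{i,j\}$. Both $k$ and $\ell$ can lie in $\sigma$ only if $\{k,\ell\}=\{i,j\}$, by affine independence of the vertices of $\sigma$ together with $v_i+v_j=v_k+v_\ell$. So at least one of $k,\ell$, say $k$, is not a vertex of $\sigma$, and then
\[
h_k+h_\ell>\psi(v_k)+\psi(v_\ell)=2\psi(x)=\psi(v_i)+\psi(v_j)=h_i+h_j .
\]
The first equality uses that $\psi$ is affine and $v_k+v_\ell=2x$; the last uses that $\psi$ agrees with $h$ on the vertices of $\sigma$.

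This gives the strict inequalities (2). Only finitely many of them occur, so multiplying $h$ by a large positive scalar yields the margin version (3). Thus $G$ is quadratically coherent.

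The only delicate point is the strictness in Step 2. We need that no lifted point other than the vertices of $\sigma$ lies on the lifted hyperplane of $\sigma$. This is exactly what distinguishes a regular \emph{triangulation} from a coarser regular subdivision, so it is guaranteed by the definition. If one prefers to argue directly, a non-strict inequality would make $\tfrac12 y$ lie in the relative interior of two distinct faces $\conv\{v_i,v_j\}$ and $\conv\{v_k,v_\ell\}$ of the lower envelope. That contradicts the uniqueness of the carrier, as in the proof of Lemma~\ref{lem:carrier}.
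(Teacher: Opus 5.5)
Your proof is correct and follows the same route as the paper, which justifies this lemma only implicitly: $D$-consistency comes from the Carrier Lemma~\ref{lem:carrier} combined with flagness, and quadratic coherence comes from the inducing height vector making each selected level-$2$ carrier strictly lower than its competitors, followed by rescaling to margin~$1$. Your affine-function argument over a maximal simplex $\sigma\supseteq\{v_i,v_j\}$ supplies the proof of the strictness claim that the paper only asserts.
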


The certificate for such a pattern is exact.  Each comparison inequality has the form
\[
 r_a\cdot h\le -1,
\]
where $r_a\in\ZZ^{N}$ has two $+1$ entries and two $-1$ entries.  The stored Farkas certificate is a list of nonnegative integers $\lambda_a$, not all zero, such that
\[
 \sum_a\lambda_a r_a=0.
\tag{4}
\]
If a height vector satisfied all inequalities, then multiplying by $\lambda_a$ and summing would give
\[
0=\left(\sum_a\lambda_a r_a\right)\cdot h
\le -\sum_a\lambda_a<0,
\]
which is impossible. Therefore, no regular triangulation can contain all choices in $C$.

The corresponding SAT blocking clause is
\[
 \neg z_{\phi_1,e_1}\vee\cdots\vee \neg z_{\phi_m,e_m}.
\tag{5}
\]
This says that at least one of the choices in the forbidden height pattern must change. This clause blocks many assignments at once, not only the single assignment that led to the pattern.

Repeating this procedure terminates because there are only finitely many assignments of the selector variables.  In the Fano computation, termination occurs by unsatisfiability in each of the six symmetry-broken cases.  Since each added blocking clause is justified by an exact Farkas certificate, the final UNSAT result proves that no \(D\)-consistent graph can be quadratically coherent.  By Lemma \ref{lem:regular-implies-coherent}, this rules out regular unimodular flag triangulations.

\begin{remark}[Symmetry Breaking]
Suppose that a group \(\Gamma\) acts on \(P=\conv(A)\) by lattice
automorphisms preserving \(A\).  Then \(\Gamma\) acts on the edge variables,
carrier clauses, and height inequalities by relabeling:
\[
  e_{ij}\mapsto e_{\gamma(i)\gamma(j)},
  \qquad
  F\in\cF_t(y)\mapsto \gamma F\in\cF_t(\gamma y).
\]
Thus the carrier clauses and the quadratic coherence inequalities are
\(\Gamma\)-invariant.  In particular, if a \(D\)-consistent graph is
quadratically coherent, then every translate of it is also \(D\)-consistent
and quadratically coherent.

We may therefore impose symmetry-breaking assumptions whenever they choose at
least one representative from each \(\Gamma\)-orbit of possible solutions.
Similarly, if a forbidden height pattern \(C\) is certified impossible, then
every translate \(\gamma C\) is impossible as well.  Hence the full
\(\Gamma\)-orbit of the corresponding blocking clause may be added to the SAT
formula.
\end{remark}

\subsection{Certificate structure and independent verification}

The SAT solver is used only to search.  The proof of nonexistence is the
combination of the following independently checkable data:
\begin{enumerate}
\item The CNF encoding of the level-\(2\) and level-\(3\) carrier constraints.
\item The symmetry-breaking assumptions.
\item For every added blocking clause, an exact Farkas certificate proving that the blocked height pattern is incompatible with quadratic coherence.
\item The final UNSAT certificate for each of the symmetry-broken CNF instances.
\item A deterministic checker verifying that all clauses and all orbit cuts are valid consequences of the data.
\end{enumerate}

\section{Using the method on the Fano matroid polytope}\label{largermatroids}

In the rest of this section we consider only matroid polytopes, in fact we concentrate in the base polytope of the Fano matroid. It is known that it has the following Ehrhart $h^*$-polynomial (see \cite[Table~2]{DeLoeraHawsKoeppe}),
\[
 h^*(P_{F_7})=(1,21,91,98,21,0,0).
\]
The total sum of these entries is $232$. Since the sum of the $h^*$-vector of a polytope agrees with the normalized volume of the polytope, we have that every unimodular triangulation of $P_{F_7}$ has exactly $232$ maximal $6$-simplices. More generally, if a polytope $P$ admits a unimodular triangulation, the $h$-polynomial of the triangulation agrees with the $h^*$-vector of the polytope (see \cite[Theorem~10.3]{BeckRobins}). In particular, all the unimodular triangulations of the base polytope of $F_7$ have the same $f$-vector, which is obtained from the $h$-vector by a linear transformation:
    \[ (28,238,903,1778,1897,1043,232).\]

For the Fano matroid base polytope, the level-$2$ carrier clauses of distinct pairs have the following sizes:
\[
126\text{ singleton carrier clauses},\quad 105\text{ two-way carrier clauses},\quad 7\text{ six-way carrier clauses}.
\]
The singleton carrier clauses give forced edges.  If a carrier clause $\phi$ has options $e=1,\ldots,s_{\phi}$, then the Boolean variable
\[
 z_{\phi,e}
\]
means that edge $e$ is the chosen edge in carrier clause $\phi$.  The exact-one clauses are
\[
 z_{\phi,1}\vee\cdots\vee z_{g,m_g},
\qquad
 \neg z_{\phi,e}\vee\neg z_{\phi,e'}\quad(e\ne e').
\]
There are $112$ non-forced carrier clauses.  A solution of the quadratic clauses therefore gives
\[
126+112=238
\]
edges. All remaining pairs are nonedges.

The next proposition shows that the obstruction is genuinely one of regularity and also serves as an empirical verification that the algorithm works before the regularity step. The Fano matroid base polytope does admit unimodular flag triangulations; none of them can be regular by the main theorem. The certificate was found using Algorithm~\ref{alg:carrier-regularity} before the regularity steps. It can be found in the appendix.  Recall that in Section~\ref{sec:TOPCOM} it was already reported that over 14,873 non-regular unimodular flag triangulations of the Fano matroid base polytope have been found independently by TOPCOM.

\begin{proposition}\label{prop:positive}
The Fano matroid base polytope admits unimodular flag triangulations.
\end{proposition}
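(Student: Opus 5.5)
The proof is by explicit certificate. We exhibit a concrete simplicial complex $\Delta$ on the $28$ vertices of $P_{F_7}$ and verify directly that it is a unimodular flag triangulation. To produce $\Delta$, we run Algorithm~\ref{alg:carrier-regularity} with $D=3$ and stop before the regularity loop. The SAT solver returns a $3$-consistent graph $G$ with exactly $238$ edges: the $126$ forced edges from singleton carrier clauses and one selected edge in each of the $112$ non-forced level-$2$ clauses. We take $\Delta$ to be the clique complex of $G$ and list its $232$ maximal cliques. Such a candidate is also available independently from the TOPCOM enumeration with flag-pruning of Section~\ref{sec:TOPCOM}. The appendix records the list of $232$ simplices.

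The verification splits into four finite checks.

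\begin{enumerate}
\item \textbf{Unimodularity.} Each of the $232$ maximal cliques has exactly $7$ vertices. The corresponding $7\times 7$ matrix of vertex coordinates, all lying in the hyperplane $\sum x_i=3$, has the determinant that encodes normalized volume $1$ relative to the lattice $\ZZ^7\cap\{\sum x_i=3\}$.
\item \textbf{Covering.} The sum of normalized volumes is $232$, which equals $\sum h^*(P_{F_7})=\Vol(P_{F_7})$. So once the simplices are shown to intersect properly, they cover $P_{F_7}$ by a volume count; no separate check of (UP) is needed.
\item \textbf{Proper intersection.} Rather than checking all pairs of simplices, we use the standard pseudomanifold criterion from \cite{DeLoeraRambauSantos}, the same one used in symLSRS. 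It suffices to check two things: every interior facet lies in exactly two maximal simplices, on opposite sides of the facet hyperplane, and every boundary facet lies in exactly one. Together with the volume equality, this yields a triangulation.
\item \textbf{Flagness.} We check that the complex generated by the $232$ simplices coincides with the clique complex of its $1$-skeleton. Equivalently, every clique of the graph lies in some listed simplex, or every minimal non-face has size $2$. A consistency check is that the $f$-vector must come out as $(28,238,903,1778,1897,1043,232)$.
\end{enumerate}

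The main obstacle is not the verification, which is mechanical linear algebra on a list of size $232$. It is obtaining a $3$-consistent graph whose clique complex is actually a geometric triangulation. $3$-consistency is only necessary: its cliques may have the wrong sizes, fail the pseudomanifold property, or the clique complex may contain extra non-maximal-fitting cliques. If the first SAT solution fails, we add a blocking clause for that graph, or for the offending clique pattern up to symmetry, and resample. If repeated failures occur, we fall back on one of the flag triangulations found by TOPCOM.
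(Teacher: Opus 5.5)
Your proof is correct and has the same overall shape as the paper's: exhibit the clique complex $\Delta(G)$ of the SAT-produced graph and verify it by a finite computation. The real difference is how you certify that the $232$ unimodular $6$-simplices form a triangulation.

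The paper checks that every pair of maximal simplices meets in the convex hull of its common vertices. It then uses $232=\Vol_{\ZZ}(P_{F_7})$: proper intersection gives covering multiplicity at most one, and the volume count forces covering.

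You check the pseudomanifold property instead. This involves only the $1043$ five-dimensional faces rather than $\binom{232}{2}$ pairs, and the roles of the two ingredients are reversed:
\begin{enumerate}
\item Exact facet matching on opposite sides makes the number of simplices containing a generic point constant. Crossing a facet hyperplane at a generic point trades each simplex on one side for exactly one on the other. The constant is therefore at least one, which gives covering.
\item The volume identity forces the constant to be $1$.
\item Interior-disjointness plus exact facet matching then yields face-to-face intersections, by a local argument around each face.
\end{enumerate}
This is a stronger use of the pseudomanifold property than in symLSRS. There, proper intersection holds by construction and only covering is deduced. So you should invoke, or at least sketch, the full statement above rather than cite it as ``the same one used in symLSRS''.

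Two minor points. First, since $\Delta$ is defined as the clique complex of $G$, flagness is automatic once all maximal cliques are $7$-sets, so your step 4 is only a consistency check. Second, with the raw $0/1$ coordinates on $\sum x_i=3$, the $7\times 7$ determinant equals $\pm 3$ times the normalized volume. The unimodularity test is therefore $\lvert\det\rvert=3$, or $\lvert\det\rvert=1$ after dropping a coordinate and appending a column of ones.
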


\begin{proof}
The certificate is a graph $G$ on the $28$ Fano bases.  Its clique complex
\[
 \Delta(G)=\{S:S\text{ is a clique of }G\}
\]
has face vector
\[
 f(\Delta(G))=(28,238,903,1778,1897,1043,232).
\]
The independent checker verifies that there are no $8$-cliques; that the maximal cliques are exactly $232$ seven-vertex sets; that each maximal clique spans a unimodular $6$-simplex; and that every pair of maximal simplices intersects in the convex hull of its common vertices.  Since the total normalized volume is $232=\Vol_{\ZZ}(P_{F_7})$, these simplices cover $P_{F_7}$.  Thus $\Delta(G)$ is a triangulation.  It is flag by definition, because it is a clique complex, and it is unimodular because all maximal simplices are unimodular.
\end{proof}

The same certificate found in the Appendix can also be written in TOPCOM \cite{Rambau2002TOPCOM} seed format using the usual $0/1$ coordinates in the hyperplane $x_0+\cdots+x_6=3$ as coordinates for the vertices of the Fano matroid base polytope.  The independent Python verifier checks the clique-complex definition of flagness and the geometric triangulation condition.  TOPCOM in versions 1.2.0 and above \cite{Rambau:TOPCOM-preprint:2026} can provide an independent confirmation of the triangulation property, unimodularity, and flagness of the seed triangulation from scratch.

\subsection{Symmetry and the six cases} We use symmetry-breaking and Lemma \ref{lem:regular-implies-coherent} with $D = 3$ to show that there is no regular unimodular flag triangulation of the Fano matroid. 

The automorphism group of the Fano matroid has order $168$.  It acts on bases, selected edges, level-$2$ carrier clauses, and height inequalities.  Among the level-$2$ carrier clauses there are seven six-way clauses, with $42$ total edges. The automorphism group acts transitively on these $42$ edges. We have found an instance for which the answer is $42$.

One of the edges is
\[
 013\mid 246.
\]

This notation means the graph edge between the Fano bases $013$ and $246$.  These two bases are disjoint and together use all ground-set elements except $5$. Since every solution chooses one option in each six-way set, and since all $42$ such options lie in one automorphism orbit, any hypothetical solution can be relabeled so that it contains $013\mid246$.  We impose this as a symmetry break by setting this $e_{013, 246}$ as \texttt{true}.

After this, we split on the six cases of another six-way clause.  In the implementation, it is the carrier clause whose options are
\[
013\mid256,
016\mid235,
023\mid156,
025\mid136,
035\mid126,
036\mid125.
\]
Every remaining solution chooses exactly one of these six edges. Hence checking the six cases exhausts the symmetry-broken search space.  The choice of this set is not mathematically meaningful, it is a choice that proved successful.

Whenever a forbidden height pattern is found, the proof adds its full orbit under $\Aut(F_7)$.  Running algorithm \ref{alg:carrier-regularity} with $D = 3$ on the six cases gives the following results:

\[
\begin{array}{c|c|c|c|c}
\text{case} & \text{fixed option} & \text{patterns} & \text{orbit cuts} & \text{status}\\
\hline
0 & 013\mid256 & 17 & 2856 & \mathrm{UNSAT}\\
1 & 016\mid235 & 16 & 2688 & \mathrm{UNSAT}\\
2 & 023\mid156 & 20 & 3360 & \mathrm{UNSAT}\\
3 & 025\mid136 & 10 & 1680 & \mathrm{UNSAT}\\
4 & 035\mid126 & 26 & 4368 & \mathrm{UNSAT}\\
5 & 036\mid125 & 13 & 2184 & \mathrm{UNSAT}
\end{array}
\]
In total, the certificate stores $102$ representative forbidden height patterns.  Their automorphism orbits produce $17136$ blocking clauses. This implies the main theorem of this paper. The time to produce the certificate was $14$ seconds on an AMD Ryzen 9 9950X3D (16 cores, 32 threads), 64 GB RAM, Ubuntu 24.04, Python 3.13. The SAT computations were performed with CaDiCaL version 1.7.3 \cite{CaDiCaL173}, using the solver described in
\cite{BiereFazekasFleuryHeisinger-SAT-Competition-2020-solvers}.

\begin{theorem}\label{thm:no-regular}
The Fano matroid base polytope has no regular unimodular flag triangulation.
\end{theorem}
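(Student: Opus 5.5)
The proof is by contradiction, and it assembles the computational certificate described above into a logical argument. Suppose $\Delta$ is a regular unimodular flag triangulation of $P_{F_7}$, and let $G$ be its $1$-skeleton.

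\textbf{Step 1: consistency and coherence of $G$.} By Lemma~\ref{lem:carrier}, $G$ satisfies the carrier rule \eqref{eq:carrier-rule} at levels $t=2,3$. Here flagness is used to identify faces of $\Delta$ with cliques of $G$, so $G$ is $3$-consistent. Because $\Delta$ is regular, any height vector $h$ inducing it satisfies the strict inequalities (2) for the selected edge in every level-$2$ fiber. Indeed, the midpoint $\tfrac12(v_i+v_j)$ has carrier $\{i,j\}$, which lies on the lower envelope, while the other decompositions $v_k+v_\ell$ of the same point lift strictly higher. After rescaling we obtain (3). This is Lemma~\ref{lem:regular-implies-coherent}: $G$ yields a satisfying assignment of the level-$2$/$3$ CNF that is quadratically coherent.

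\textbf{Step 2: symmetry reduction to six cases.} The group $\Aut(F_7)$ of order $168$ acts on bases by lattice automorphisms preserving $P_{F_7}$. So every image $\gamma G$ is again the graph of a regular unimodular flag triangulation, with heights $h\circ\gamma^{-1}$. One checks that the $42$ options of the seven six-way level-$2$ clauses form a single orbit. Since $G$ selects one option from each such clause, we may replace $G$ by a translate containing $013\mid246$. The resulting $G$ then selects exactly one of the six options of the distinguished second six-way clause, which gives cases $0$ through $5$.

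\textbf{Step 3: validity of the blocking clauses.} Each of the $102$ stored forbidden height patterns $C$ comes with nonnegative integers $\lambda_a$, not all zero, satisfying (4). The argument following (4) shows that no $h$ satisfies (3) for all choices in $C$. Hence $G$ cannot realize every choice in $C$, and the same holds for every $\gamma C$. So $G$'s selector assignment satisfies all $17136$ orbit cuts (5). Verifying this step means checking each identity $\sum_a\lambda_a r_a=0$ in exact integer arithmetic and confirming that the orbit images are computed correctly under the relabeling action.

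\textbf{Step 4: contradiction.} In whichever case $G$ falls, its edge and selector assignment satisfies the carrier CNF, the symmetry-breaking units and all cuts. But each of the six formulas has been certified UNSAT by CaDiCaL, which gives the desired contradiction.

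The main obstacle is not conceptual but the trustworthiness of Steps 2–4, which must be established by independent checking. This requires:
\begin{itemize}
\item confirming that the generated CNF faithfully encodes every $\mathcal F_t(y)$ for $t\le 3$ (completeness of the enumeration of decompositions is essential, since a missing option would make a clause too strong);
\item confirming the orbit claim for the $42$ edges;
\item validating DRAT-style UNSAT proofs for the six instances.
\end{itemize}
Mathematically, the one delicate point is the level-$2$ regularity inequality (2). I would justify it via the lower-envelope characterization: the height of the lifted midpoint over the carrier edge is minimal among all convex combinations representing that point, and it is strictly minimal for combinations not supported on a face.
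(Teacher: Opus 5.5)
Your proposal is correct and follows the paper's argument essentially step for step. The carrier lemma together with flagness gives $3$-consistency, regularity gives quadratic coherence (Lemma~\ref{lem:regular-implies-coherent}), and the transitivity of $\Aut(F_7)$ on the $42$ six-way options justifies fixing $013\mid246$ and splitting into six cases. The Farkas-certified orbit cuts, combined with the six UNSAT certificates, then give the contradiction. Your lower-envelope justification of the strict inequality (2) and your list of items needing independent verification make explicit what the paper only asserts or lists in its certificate-structure subsection.
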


\begin{remark}
    As is clear from the preceding discussion, our proof relies on several subtle computations carried out by a computer. However, we wish to emphasize that none of the mathematical ideas discussed in the present paper, which ultimately lead to the main result appearing above, relies on the use of Artificial Intelligence.
\end{remark}

\section{Conclusions and further comments}

\subsection{Code to generate concrete examples}
Our code implements Algorithm \ref{alg:carrier-regularity} and proves and verifies Theorem \ref{thm:no-regular}. It will be provided upon request. The same code can help investigate whether other matroid base polytopes have regular unimodular flag triangulations. The input data takes the following form. 


\begin{verbatim}
{
  "name": "Fano",
  "ground_set": [0, 1, 2, 3, 4, 5, 6],
  "rank": 3,
  "non_bases": [[0,1,2],[0,3,4],[0,5,6],
                [1,3,5],[1,4,6],[2,3,6],[2,4,5]]
}
\end{verbatim}

 We ran our code on several famous small matroids. 
Table \ref{tab:small-matroids} records the outcome of the included certificates for several well-known small matroids. The notation follows the standard matroid notation in Oxley \cite{Oxley}.

\begin{table}[htbp]
\centering
\small
\setlength{\tabcolsep}{4pt}
\renewcommand{\arraystretch}{1.12}
\begin{tabular}{@{}l>{\raggedright\arraybackslash}p{0.28\textwidth}rrrrrc@{}}
\toprule
Notation & Name & \(n\) & \(\operatorname{rank}\) & \(|\cB|\) &
\(\dim P\) & \(\Vol_{\ZZ}(P)\) &
\begin{tabular}[c]{@{}c@{}}quadratic\\triangulation?\end{tabular} \\
\midrule
\(U_{2,4}\) & four-point line & 4 & 2 & 6 & 3 & 4 & yes \\
\(U_{2,5}\) & five-point line & 5 & 2 & 10 & 4 & 11 & yes \\
\(U_{3,6}\) & uniform rank-three matroid & 6 & 3 & 20 & 5 & 66 & yes \\
\(M(K_4)\) & complete-graphic matroid & 6 & 3 & 16 & 5 & 42 & yes \\
\(W^3\) & rank-three whirl & 6 & 3 & 17 & 5 & 48 & yes \\
\(F_7\) & Fano matroid & 7 & 3 & 28 & 6 & 232 & {\bf no} \\
\(F_7^{-}\) & non-Fano matroid & 7 & 3 & 29 & 6 & 242 & yes \\
\(V_8\) & V\'amos matroid & 8 & 4 & 65 & 7 & 2316 & yes \\
\bottomrule
\end{tabular}
\caption{Matroid base polytopes tested.  Here \(n\) is the size of the ground set, \(\cB\) is the set of bases, and ``quadratic triangulation'' means regular unimodular flag triangulation.}
\label{tab:small-matroids}
\end{table}

\subsection{Koszulity and flagness}

Aside from asking about the existence of quadratic Gr\"obner bases, in \cite[p.~261]{HerzogHibi2002} Herzog and Hibi also inquire about a weaker property: that of being Koszul. Let $k$ be a field. A graded $k$-algebra $A$ is said to be a \emph{Koszul algebra} if the minimal free $A$-resolution of $k$ is linear. For a thorough survey on Koszulity, we refer to Fr\"oberg \cite{froberg}. 

\begin{question}[\cite{HerzogHibi2002}]\label{question:koszul}
    Is the base ring associated to a polymatroid a Koszul algebra?
\end{question}

Despite being a purely algebraic property, the Koszulity of a graded algebra has fascinating applications in combinatorics. For example, a classical result by Fr\"oberg shows that the Stanley--Reisner ring of a simplicial complex $\Delta$ is Koszul if and only if $\Delta$ is a flag complex.
A well-known result by Anick \cite{anick} (see \cite[Section~4]{froberg}) shows that if an ideal $I\subseteq k[x_1,\ldots,x_n]$ in a polynomial ring has a quadratic Gr\"obner basis, then $k[x_1,\ldots,x_n]/I$ is Koszul (see \cite[Corollary~2.1.3]{BrunsGubeladzeTrung} for a polyhedral discussion on this fact). Furthermore, the Koszul property  on $k[x_1,\ldots,x_n]/I$ implies that the ideal $I$ is quadratically generated. Although we disproved the existence of quadratic Gr\"obner bases for (poly)matroid toric ideals, we suspect that the answer to the Koszulity question by Herzog and Hibi might actually be affirmative.

As noted above, Koszulity is tightly related to the flagness property. It is also natural to inquire whether polymatroids always admit the existence of unimodular flag triangulations. We have not been able to find any matroid without a flag unimodular triangulation. Based on several experiments on (poly)matroids on a ground set of size $8$, we suspect that the following conjecture might be true.

\begin{conjecture}\label{conj:flag}
    Every polymatroid base polytope has a unimodular flag triangulation.
\end{conjecture}

As proved in this article, one cannot, in general, expect those triangulations to be regular. At this time, the relationship between the Koszul property and the existence of unimodular flag triangulations is very enigmatic. We emphasize that the latter property only means that the Stanley--Reisner ring of the triangulation is Koszul, but this property does not obviously transfer to the base ring of the matroid. We suggest \cite{BrunsGubeladzeTrung,Payne09} for further reading about polytopes and the Koszul property.

\subsection{Relationship with Ehrhart inequalities}

As we shall now explain, a proof of Conjecture~\ref{conj:flag} or an affirmative answer to Question~\ref{question:koszul} would have interesting consequences at the level of the Ehrhart polynomials. The base ring of a polymatroid is naturally isomorphic to the Ehrhart ring of the base polytope of the polymatroid (see \cite{MillerSturmfels,MichalekSturmfels} for more details). 

In general, Koszulity imposes heavy restrictions on the shape of the Hilbert series of an algebra.  As an example, the Hilbert series $H_A(t)\in \mathbb{Z}_{\geq 0}[[t]]$ of a Koszul algebra $A$ is known to satisfy several ``positivity properties''; for example, the infinite series $1/H_{A}(-t)$ is constrained to have nonnegative coefficients by \cite[Theorem~1]{froberg} --- further inequalities were established by Polishchuk and Positselski \cite[Chapter~7, Theorem 2.1]{PolishchukPositselski}.

As mentioned earlier in this paper, the Ehrhart $h^*$-polynomial of $P$ is equal to the $h$-vector of a triangulation~$\Delta$ of~$P$. Moreover, the existence of a unimodular flag (not necessarily regular) triangulation of a polytope also imposes inequalities at the level of $h^*$-polynomials: as explained above, the $h$-polynomial of any unimodular triangulation matches the $h^*$-polynomial of the polytope. In particular, the flagness of the triangulation imposes that the $h^*$-polynomial must be the $h$-vector of a flag simplicial complex. Several inequalities have been proved for such simplicial complexes; e.g., the Frankl--F\"uredi--Kalai inequalities \cite{frankl-furedi-kalai} (all of which apply to flag complexes thanks to Fr\"ohmader's results \cite{frohmader}), and an inequality proved by Stanley in the context of chromatic function on graphs (see \cite[p.~100]{stanley-green}).

In particular, proving Conjecture~\ref{conj:flag} or answering affirmatively Question~\ref{question:koszul} would impose several positivity properties for the Ehrhart series and the Ehrhart $h^*$-vector of matroid base polytopes. This is very much related to the unimodality conjecture for $h^*$-polynomials of matroids posed by De~Loera, Haws, and K\"oppe \cite[Conjecture~2b]{DeLoeraHawsKoeppe} and to the stronger version, asserting real-rootedness, posed by Ferroni in \cite[Conjecture~1.3]{Ferroni}. That real-rootedness conjecture, if proved true, would be compatible with the known numerical obstructions given by Koszulity (see \cite[Theorem~4.13]{ferroni-higashitani}), and it is also conjecturally compatible with the numerical obstructions imposed by the existence of unimodular flag triangulations (see \cite[Question~5.1]{bell-skandera}).

\bibliographystyle{amsalpha}
\bibliography{bibliography}

\newpage
\appendix 
\section{A unimodular flag triangulation of the Fano matroid}

As a sanity check and as an illustration of the edge selection, we record a unimodular flag triangulation $\Delta$ found by the SAT search: its $28$ vertices are the bases of the Fano matroid (the $3$-subsets of $\{0,\dots,6\}$ other than the seven non-bases $012$, $034$, $056$, $135$, $146$, $236$, $245$), $\Delta$ is the clique complex of the graph $G$ below, and an edge between bases $B,B'$ is written $B \mid B'$. We note we have found many unimodular flag triangulations; however, none of them are also regular.

\subsection*{The triangulation \texorpdfstring{$\Delta$}{Delta} (232 facets)}
Each facet is a maximal simplex of $\Delta$, written as the $7$ bases it spans.
\begin{center}\footnotesize
$\{$ $\{013,\,014,\,015,\,016,\,024,\,045,\,246\}$, $\{013,\,014,\,015,\,016,\,024,\,126,\,246\}$,\\
$\{013,\,014,\,015,\,016,\,045,\,246,\,456\}$, $\{013,\,014,\,015,\,016,\,126,\,156,\,246\}$,\\
$\{013,\,014,\,015,\,016,\,156,\,246,\,456\}$, $\{013,\,014,\,015,\,024,\,045,\,246,\,345\}$,\\
$\{013,\,014,\,015,\,024,\,123,\,126,\,246\}$, $\{013,\,014,\,015,\,024,\,123,\,134,\,246\}$,\\
$\{013,\,014,\,015,\,024,\,134,\,246,\,345\}$, $\{013,\,014,\,015,\,045,\,246,\,345,\,456\}$,\\
$\{013,\,014,\,015,\,123,\,126,\,156,\,246\}$, $\{013,\,014,\,015,\,123,\,134,\,156,\,246\}$,\\
$\{013,\,014,\,015,\,134,\,156,\,246,\,456\}$, $\{013,\,014,\,015,\,134,\,246,\,345,\,456\}$,\\
$\{013,\,014,\,016,\,024,\,045,\,046,\,246\}$, $\{013,\,014,\,016,\,045,\,046,\,246,\,456\}$,\\
$\{013,\,014,\,016,\,046,\,136,\,246,\,456\}$, $\{013,\,014,\,016,\,126,\,136,\,156,\,246\}$,\\
$\{013,\,014,\,016,\,136,\,156,\,246,\,456\}$, $\{013,\,014,\,024,\,045,\,046,\,246,\,345\}$,\\
$\{013,\,014,\,024,\,046,\,134,\,246,\,345\}$, $\{013,\,014,\,045,\,046,\,246,\,345,\,456\}$,\\
$\{013,\,014,\,046,\,134,\,136,\,246,\,456\}$, $\{013,\,014,\,046,\,134,\,246,\,345,\,456\}$,\\
$\{013,\,014,\,123,\,126,\,136,\,156,\,246\}$, $\{013,\,014,\,123,\,134,\,136,\,156,\,246\}$,\\
$\{013,\,014,\,134,\,136,\,156,\,246,\,456\}$, $\{013,\,015,\,016,\,024,\,025,\,045,\,246\}$,\\
$\{013,\,015,\,016,\,024,\,025,\,126,\,246\}$, $\{013,\,015,\,016,\,025,\,035,\,045,\,256\}$,\\
$\{013,\,015,\,016,\,025,\,045,\,246,\,256\}$, $\{013,\,015,\,016,\,025,\,126,\,246,\,256\}$,\\
$\{013,\,015,\,016,\,035,\,045,\,256,\,456\}$, $\{013,\,015,\,016,\,035,\,256,\,356,\,456\}$,\\
$\{013,\,015,\,016,\,045,\,246,\,256,\,456\}$, $\{013,\,015,\,016,\,126,\,156,\,246,\,256\}$,\\
$\{013,\,015,\,016,\,156,\,246,\,256,\,456\}$, $\{013,\,015,\,016,\,156,\,256,\,356,\,456\}$,\\
$\{013,\,015,\,024,\,025,\,045,\,246,\,345\}$, $\{013,\,015,\,024,\,025,\,123,\,126,\,246\}$,\\
$\{013,\,015,\,024,\,025,\,123,\,134,\,246\}$, $\{013,\,015,\,024,\,025,\,134,\,246,\,345\}$,\\
$\{013,\,015,\,025,\,035,\,045,\,256,\,345\}$, $\{013,\,015,\,025,\,035,\,235,\,256,\,345\}$,\\
$\{013,\,015,\,025,\,045,\,246,\,256,\,345\}$, $\{013,\,015,\,025,\,123,\,126,\,246,\,256\}$,\\
$\{013,\,015,\,025,\,123,\,134,\,235,\,256\}$, $\{013,\,015,\,025,\,123,\,134,\,246,\,256\}$,\\
$\{013,\,015,\,025,\,134,\,235,\,256,\,345\}$, $\{013,\,015,\,025,\,134,\,246,\,256,\,345\}$,\\
$\{013,\,015,\,035,\,045,\,256,\,345,\,456\}$, $\{013,\,015,\,035,\,235,\,256,\,345,\,356\}$,\\
$\{013,\,015,\,035,\,256,\,345,\,356,\,456\}$, $\{013,\,015,\,045,\,246,\,256,\,345,\,456\}$,\\
$\{013,\,015,\,123,\,126,\,156,\,246,\,256\}$, $\{013,\,015,\,123,\,134,\,156,\,246,\,256\}$,\\
$\{013,\,015,\,123,\,134,\,156,\,256,\,356\}$, $\{013,\,015,\,123,\,134,\,235,\,256,\,356\}$,\\
$\{013,\,015,\,134,\,156,\,246,\,256,\,456\}$, $\{013,\,015,\,134,\,156,\,256,\,356,\,456\}$,\\
$\{013,\,015,\,134,\,235,\,256,\,345,\,356\}$, $\{013,\,015,\,134,\,246,\,256,\,345,\,456\}$,\\
$\{013,\,015,\,134,\,256,\,345,\,356,\,456\}$, $\{013,\,016,\,024,\,025,\,026,\,045,\,246\}$,\\
$\{013,\,016,\,024,\,025,\,026,\,126,\,246\}$, $\{013,\,016,\,024,\,026,\,045,\,046,\,246\}$,\\
$\{013,\,016,\,025,\,026,\,035,\,045,\,256\}$, $\{013,\,016,\,025,\,026,\,045,\,246,\,256\}$,\\
$\{013,\,016,\,025,\,026,\,126,\,246,\,256\}$, $\{013,\,016,\,026,\,035,\,036,\,045,\,456\}$,\\
$\{013,\,016,\,026,\,035,\,036,\,356,\,456\}$, $\{013,\,016,\,026,\,035,\,045,\,256,\,456\}$,\\
$\{013,\,016,\,026,\,035,\,256,\,356,\,456\}$, $\{013,\,016,\,026,\,036,\,045,\,046,\,456\}$,\\
$\{013,\,016,\,026,\,036,\,046,\,136,\,456\}$, $\{013,\,016,\,026,\,036,\,136,\,356,\,456\}$,\\
$\{013,\,016,\,026,\,045,\,046,\,246,\,456\}$, $\{013,\,016,\,026,\,045,\,246,\,256,\,456\}$,\\
$\{013,\,016,\,026,\,046,\,136,\,246,\,456\}$, $\{013,\,016,\,026,\,126,\,136,\,246,\,256\}$,\\
$\{013,\,016,\,026,\,136,\,246,\,256,\,456\}$, $\{013,\,016,\,026,\,136,\,256,\,356,\,456\}$,\\
$\{013,\,016,\,126,\,136,\,156,\,246,\,256\}$, $\{013,\,016,\,136,\,156,\,246,\,256,\,456\}$,\\
$\{013,\,016,\,136,\,156,\,256,\,356,\,456\}$, $\{013,\,023,\,024,\,025,\,026,\,045,\,345\}$,\\
$\{013,\,023,\,024,\,025,\,026,\,123,\,234\}$, $\{013,\,023,\,024,\,025,\,026,\,234,\,345\}$,\\
$\{013,\,023,\,024,\,026,\,045,\,046,\,345\}$, $\{013,\,023,\,024,\,026,\,046,\,234,\,345\}$,\\
$\{013,\,023,\,025,\,026,\,035,\,045,\,345\}$, $\{013,\,023,\,025,\,026,\,035,\,235,\,345\}$,\\
$\{013,\,023,\,025,\,026,\,123,\,234,\,235\}$, $\{013,\,023,\,025,\,026,\,234,\,235,\,345\}$,\\
$\{013,\,023,\,026,\,035,\,036,\,045,\,345\}$, $\{013,\,023,\,026,\,035,\,036,\,235,\,345\}$,\\
$\{013,\,023,\,026,\,036,\,045,\,046,\,345\}$, $\{013,\,023,\,026,\,036,\,046,\,234,\,345\}$,\\
$\{013,\,023,\,026,\,036,\,123,\,234,\,235\}$, $\{013,\,023,\,026,\,036,\,234,\,235,\,345\}$,\\
$\{013,\,024,\,025,\,026,\,045,\,246,\,345\}$, $\{013,\,024,\,025,\,026,\,123,\,126,\,246\}$,\\
$\{013,\,024,\,025,\,026,\,123,\,234,\,246\}$, $\{013,\,024,\,025,\,026,\,234,\,246,\,345\}$,\\
$\{013,\,024,\,025,\,123,\,134,\,234,\,246\}$, $\{013,\,024,\,025,\,134,\,234,\,246,\,345\}$,\\
$\{013,\,024,\,026,\,045,\,046,\,246,\,345\}$, $\{013,\,024,\,026,\,046,\,234,\,246,\,345\}$,\\
$\{013,\,024,\,046,\,134,\,234,\,246,\,345\}$, $\{013,\,025,\,026,\,035,\,045,\,256,\,345\}$,\\
$\{013,\,025,\,026,\,035,\,235,\,256,\,345\}$, $\{013,\,025,\,026,\,045,\,246,\,256,\,345\}$,\\
$\{013,\,025,\,026,\,123,\,126,\,246,\,256\}$, $\{013,\,025,\,026,\,123,\,234,\,235,\,256\}$,\\
$\{013,\,025,\,026,\,123,\,234,\,246,\,256\}$, $\{013,\,025,\,026,\,234,\,235,\,256,\,345\}$,\\
$\{013,\,025,\,026,\,234,\,246,\,256,\,345\}$, $\{013,\,025,\,123,\,134,\,234,\,235,\,256\}$,\\
$\{013,\,025,\,123,\,134,\,234,\,246,\,256\}$, $\{013,\,025,\,134,\,234,\,235,\,256,\,345\}$,\\
$\{013,\,025,\,134,\,234,\,246,\,256,\,345\}$, $\{013,\,026,\,035,\,036,\,045,\,345,\,456\}$,\\
$\{013,\,026,\,035,\,036,\,235,\,345,\,356\}$, $\{013,\,026,\,035,\,036,\,345,\,356,\,456\}$,\\
$\{013,\,026,\,035,\,045,\,256,\,345,\,456\}$, $\{013,\,026,\,035,\,235,\,256,\,345,\,356\}$,\\
$\{013,\,026,\,035,\,256,\,345,\,356,\,456\}$, $\{013,\,026,\,036,\,045,\,046,\,345,\,456\}$,\\
$\{013,\,026,\,036,\,046,\,136,\,346,\,456\}$, $\{013,\,026,\,036,\,046,\,234,\,345,\,346\}$,\\
$\{013,\,026,\,036,\,046,\,345,\,346,\,456\}$, $\{013,\,026,\,036,\,123,\,136,\,346,\,356\}$,\\
$\{013,\,026,\,036,\,123,\,234,\,235,\,346\}$, $\{013,\,026,\,036,\,123,\,235,\,346,\,356\}$,\\
$\{013,\,026,\,036,\,136,\,346,\,356,\,456\}$, $\{013,\,026,\,036,\,234,\,235,\,345,\,346\}$,\\
$\{013,\,026,\,036,\,235,\,345,\,346,\,356\}$, $\{013,\,026,\,036,\,345,\,346,\,356,\,456\}$,\\
$\{013,\,026,\,045,\,046,\,246,\,345,\,456\}$, $\{013,\,026,\,045,\,246,\,256,\,345,\,456\}$,\\
$\{013,\,026,\,046,\,136,\,246,\,346,\,456\}$, $\{013,\,026,\,046,\,234,\,246,\,345,\,346\}$,\\
$\{013,\,026,\,046,\,246,\,345,\,346,\,456\}$, $\{013,\,026,\,123,\,126,\,136,\,246,\,256\}$,\\
$\{013,\,026,\,123,\,136,\,246,\,256,\,346\}$, $\{013,\,026,\,123,\,136,\,256,\,346,\,356\}$,\\
$\{013,\,026,\,123,\,234,\,235,\,256,\,346\}$, $\{013,\,026,\,123,\,234,\,246,\,256,\,346\}$,\\
$\{013,\,026,\,123,\,235,\,256,\,346,\,356\}$, $\{013,\,026,\,136,\,246,\,256,\,346,\,456\}$,\\
$\{013,\,026,\,136,\,256,\,346,\,356,\,456\}$, $\{013,\,026,\,234,\,235,\,256,\,345,\,346\}$,\\
$\{013,\,026,\,234,\,246,\,256,\,345,\,346\}$, $\{013,\,026,\,235,\,256,\,345,\,346,\,356\}$,\\
$\{013,\,026,\,246,\,256,\,345,\,346,\,456\}$, $\{013,\,026,\,256,\,345,\,346,\,356,\,456\}$,\\
$\{013,\,046,\,134,\,136,\,246,\,346,\,456\}$, $\{013,\,046,\,134,\,234,\,246,\,345,\,346\}$,\\
$\{013,\,046,\,134,\,246,\,345,\,346,\,456\}$, $\{013,\,123,\,126,\,136,\,156,\,246,\,256\}$,\\
$\{013,\,123,\,134,\,136,\,156,\,246,\,256\}$, $\{013,\,123,\,134,\,136,\,156,\,256,\,356\}$,\\
$\{013,\,123,\,134,\,136,\,246,\,256,\,346\}$, $\{013,\,123,\,134,\,136,\,256,\,346,\,356\}$,\\
$\{013,\,123,\,134,\,234,\,235,\,256,\,346\}$, $\{013,\,123,\,134,\,234,\,246,\,256,\,346\}$,\\
$\{013,\,123,\,134,\,235,\,256,\,346,\,356\}$, $\{013,\,134,\,136,\,156,\,246,\,256,\,456\}$,\\
$\{013,\,134,\,136,\,156,\,256,\,356,\,456\}$, $\{013,\,134,\,136,\,246,\,256,\,346,\,456\}$,\\
$\{013,\,134,\,136,\,256,\,346,\,356,\,456\}$, $\{013,\,134,\,234,\,235,\,256,\,345,\,346\}$,\\
$\{013,\,134,\,234,\,246,\,256,\,345,\,346\}$, $\{013,\,134,\,235,\,256,\,345,\,346,\,356\}$,\\
$\{013,\,134,\,246,\,256,\,345,\,346,\,456\}$, $\{013,\,134,\,256,\,345,\,346,\,356,\,456\}$,\\
$\{014,\,015,\,024,\,045,\,124,\,246,\,345\}$, $\{014,\,015,\,024,\,123,\,124,\,126,\,246\}$,\\
$\{014,\,015,\,024,\,123,\,124,\,134,\,246\}$, $\{014,\,015,\,024,\,124,\,134,\,246,\,345\}$,\\
$\{014,\,015,\,045,\,124,\,145,\,246,\,345\}$, $\{014,\,015,\,045,\,145,\,246,\,345,\,456\}$,\\
$\{014,\,015,\,123,\,124,\,126,\,156,\,246\}$, $\{014,\,015,\,123,\,124,\,134,\,156,\,246\}$,\\
$\{014,\,015,\,124,\,134,\,145,\,156,\,246\}$, $\{014,\,015,\,124,\,134,\,145,\,246,\,345\}$,\\
$\{014,\,015,\,134,\,145,\,156,\,246,\,456\}$, $\{014,\,015,\,134,\,145,\,246,\,345,\,456\}$,\\
$\{014,\,123,\,124,\,126,\,136,\,156,\,246\}$, $\{014,\,123,\,124,\,134,\,136,\,156,\,246\}$,\\
$\{015,\,024,\,025,\,045,\,124,\,246,\,345\}$, $\{015,\,024,\,025,\,123,\,124,\,126,\,246\}$,\\
$\{015,\,024,\,025,\,123,\,124,\,134,\,246\}$, $\{015,\,024,\,025,\,124,\,134,\,246,\,345\}$,\\
$\{015,\,025,\,045,\,124,\,125,\,246,\,345\}$, $\{015,\,025,\,045,\,125,\,246,\,256,\,345\}$,\\
$\{015,\,025,\,123,\,124,\,125,\,126,\,246\}$, $\{015,\,025,\,123,\,124,\,125,\,134,\,246\}$,\\
$\{015,\,025,\,123,\,125,\,126,\,246,\,256\}$, $\{015,\,025,\,123,\,125,\,134,\,235,\,256\}$,\\
$\{015,\,025,\,123,\,125,\,134,\,246,\,256\}$, $\{015,\,025,\,124,\,125,\,134,\,246,\,345\}$,\\
$\{015,\,025,\,125,\,134,\,235,\,256,\,345\}$, $\{015,\,025,\,125,\,134,\,246,\,256,\,345\}$,\\
$\{015,\,045,\,124,\,125,\,145,\,246,\,345\}$, $\{015,\,045,\,125,\,145,\,246,\,256,\,345\}$,\\
$\{015,\,045,\,145,\,246,\,256,\,345,\,456\}$, $\{015,\,123,\,124,\,125,\,126,\,156,\,246\}$,\\
$\{015,\,123,\,124,\,125,\,134,\,156,\,246\}$, $\{015,\,123,\,125,\,126,\,156,\,246,\,256\}$,\\
$\{015,\,123,\,125,\,134,\,156,\,246,\,256\}$, $\{015,\,123,\,125,\,134,\,156,\,256,\,356\}$,\\
$\{015,\,123,\,125,\,134,\,235,\,256,\,356\}$, $\{015,\,124,\,125,\,134,\,145,\,156,\,246\}$,\\
$\{015,\,124,\,125,\,134,\,145,\,246,\,345\}$, $\{015,\,125,\,134,\,145,\,156,\,246,\,256\}$,\\
$\{015,\,125,\,134,\,145,\,156,\,256,\,356\}$, $\{015,\,125,\,134,\,145,\,246,\,256,\,345\}$,\\
$\{015,\,125,\,134,\,145,\,256,\,345,\,356\}$, $\{015,\,125,\,134,\,235,\,256,\,345,\,356\}$,\\
$\{015,\,134,\,145,\,156,\,246,\,256,\,456\}$, $\{015,\,134,\,145,\,156,\,256,\,356,\,456\}$,\\
$\{015,\,134,\,145,\,246,\,256,\,345,\,456\}$, $\{015,\,134,\,145,\,256,\,345,\,356,\,456\}$,\\
$\{024,\,025,\,123,\,124,\,134,\,234,\,246\}$, $\{024,\,025,\,124,\,134,\,234,\,246,\,345\}$,\\
$\{025,\,123,\,124,\,125,\,134,\,234,\,246\}$, $\{025,\,123,\,125,\,134,\,234,\,235,\,256\}$,\\
$\{025,\,123,\,125,\,134,\,234,\,246,\,256\}$, $\{025,\,124,\,125,\,134,\,234,\,246,\,345\}$,\\
$\{025,\,125,\,134,\,234,\,235,\,256,\,345\}$, $\{025,\,125,\,134,\,234,\,246,\,256,\,345\}$ $\}$
\end{center}

\subsection*{Edges of \texorpdfstring{$\Delta$}{Delta} (238)}
{\footnotesize\sloppy $013 \mid 014$,\ $013 \mid 015$,\ $013 \mid 016$,\ $013 \mid 023$,\ $013 \mid 024$,\ $013 \mid 025$,\ $013 \mid 026$,\ $013 \mid 035$,\ $013 \mid 036$,\ $013 \mid 045$,\ $013 \mid 046$,\ $013 \mid 123$,\ $013 \mid 126$,\ $013 \mid 134$,\ $013 \mid 136$,\ $013 \mid 156$,\ $013 \mid 234$,\ $013 \mid 235$,\ $013 \mid 246$,\ $013 \mid 256$,\ $013 \mid 345$,\ $013 \mid 346$,\ $013 \mid 356$,\ $013 \mid 456$,\ $014 \mid 015$,\ $014 \mid 016$,\ $014 \mid 024$,\ $014 \mid 045$,\ $014 \mid 046$,\ $014 \mid 123$,\ $014 \mid 124$,\ $014 \mid 126$,\ $014 \mid 134$,\ $014 \mid 136$,\ $014 \mid 145$,\ $014 \mid 156$,\ $014 \mid 246$,\ $014 \mid 345$,\ $014 \mid 456$,\ $015 \mid 016$,\ $015 \mid 024$,\ $015 \mid 025$,\ $015 \mid 035$,\ $015 \mid 045$,\ $015 \mid 123$,\ $015 \mid 124$,\ $015 \mid 125$,\ $015 \mid 126$,\ $015 \mid 134$,\ $015 \mid 145$,\ $015 \mid 156$,\ $015 \mid 235$,\ $015 \mid 246$,\ $015 \mid 256$,\ $015 \mid 345$,\ $015 \mid 356$,\ $015 \mid 456$,\ $016 \mid 024$,\ $016 \mid 025$,\ $016 \mid 026$,\ $016 \mid 035$,\ $016 \mid 036$,\ $016 \mid 045$,\ $016 \mid 046$,\ $016 \mid 126$,\ $016 \mid 136$,\ $016 \mid 156$,\ $016 \mid 246$,\ $016 \mid 256$,\ $016 \mid 356$,\ $016 \mid 456$,\ $023 \mid 024$,\ $023 \mid 025$,\ $023 \mid 026$,\ $023 \mid 035$,\ $023 \mid 036$,\ $023 \mid 045$,\ $023 \mid 046$,\ $023 \mid 123$,\ $023 \mid 234$,\ $023 \mid 235$,\ $023 \mid 345$,\ $024 \mid 025$,\ $024 \mid 026$,\ $024 \mid 045$,\ $024 \mid 046$,\ $024 \mid 123$,\ $024 \mid 124$,\ $024 \mid 126$,\ $024 \mid 134$,\ $024 \mid 234$,\ $024 \mid 246$,\ $024 \mid 345$,\ $025 \mid 026$,\ $025 \mid 035$,\ $025 \mid 045$,\ $025 \mid 123$,\ $025 \mid 124$,\ $025 \mid 125$,\ $025 \mid 126$,\ $025 \mid 134$,\ $025 \mid 234$,\ $025 \mid 235$,\ $025 \mid 246$,\ $025 \mid 256$,\ $025 \mid 345$,\ $026 \mid 035$,\ $026 \mid 036$,\ $026 \mid 045$,\ $026 \mid 046$,\ $026 \mid 123$,\ $026 \mid 126$,\ $026 \mid 136$,\ $026 \mid 234$,\ $026 \mid 235$,\ $026 \mid 246$,\ $026 \mid 256$,\ $026 \mid 345$,\ $026 \mid 346$,\ $026 \mid 356$,\ $026 \mid 456$,\ $035 \mid 036$,\ $035 \mid 045$,\ $035 \mid 235$,\ $035 \mid 256$,\ $035 \mid 345$,\ $035 \mid 356$,\ $035 \mid 456$,\ $036 \mid 045$,\ $036 \mid 046$,\ $036 \mid 123$,\ $036 \mid 136$,\ $036 \mid 234$,\ $036 \mid 235$,\ $036 \mid 345$,\ $036 \mid 346$,\ $036 \mid 356$,\ $036 \mid 456$,\ $045 \mid 046$,\ $045 \mid 124$,\ $045 \mid 125$,\ $045 \mid 145$,\ $045 \mid 246$,\ $045 \mid 256$,\ $045 \mid 345$,\ $045 \mid 456$,\ $046 \mid 134$,\ $046 \mid 136$,\ $046 \mid 234$,\ $046 \mid 246$,\ $046 \mid 345$,\ $046 \mid 346$,\ $046 \mid 456$,\ $123 \mid 124$,\ $123 \mid 125$,\ $123 \mid 126$,\ $123 \mid 134$,\ $123 \mid 136$,\ $123 \mid 156$,\ $123 \mid 234$,\ $123 \mid 235$,\ $123 \mid 246$,\ $123 \mid 256$,\ $123 \mid 346$,\ $123 \mid 356$,\ $124 \mid 125$,\ $124 \mid 126$,\ $124 \mid 134$,\ $124 \mid 136$,\ $124 \mid 145$,\ $124 \mid 156$,\ $124 \mid 234$,\ $124 \mid 246$,\ $124 \mid 345$,\ $125 \mid 126$,\ $125 \mid 134$,\ $125 \mid 145$,\ $125 \mid 156$,\ $125 \mid 234$,\ $125 \mid 235$,\ $125 \mid 246$,\ $125 \mid 256$,\ $125 \mid 345$,\ $125 \mid 356$,\ $126 \mid 136$,\ $126 \mid 156$,\ $126 \mid 246$,\ $126 \mid 256$,\ $134 \mid 136$,\ $134 \mid 145$,\ $134 \mid 156$,\ $134 \mid 234$,\ $134 \mid 235$,\ $134 \mid 246$,\ $134 \mid 256$,\ $134 \mid 345$,\ $134 \mid 346$,\ $134 \mid 356$,\ $134 \mid 456$,\ $136 \mid 156$,\ $136 \mid 246$,\ $136 \mid 256$,\ $136 \mid 346$,\ $136 \mid 356$,\ $136 \mid 456$,\ $145 \mid 156$,\ $145 \mid 246$,\ $145 \mid 256$,\ $145 \mid 345$,\ $145 \mid 356$,\ $145 \mid 456$,\ $156 \mid 246$,\ $156 \mid 256$,\ $156 \mid 356$,\ $156 \mid 456$,\ $234 \mid 235$,\ $234 \mid 246$,\ $234 \mid 256$,\ $234 \mid 345$,\ $234 \mid 346$,\ $235 \mid 256$,\ $235 \mid 345$,\ $235 \mid 346$,\ $235 \mid 356$,\ $246 \mid 256$,\ $246 \mid 345$,\ $246 \mid 346$,\ $246 \mid 456$,\ $256 \mid 345$,\ $256 \mid 346$,\ $256 \mid 356$,\ $256 \mid 456$,\ $345 \mid 346$,\ $345 \mid 356$,\ $345 \mid 456$,\ $346 \mid 356$,\ $346 \mid 456$,\ $356 \mid 456$.\par}

\subsection*{Non-faces of \texorpdfstring{$\Delta$}{Delta} (140 non-edges)}
{\footnotesize\sloppy $013 \mid 124$,\ $013 \mid 125$,\ $013 \mid 145$,\ $014 \mid 023$,\ $014 \mid 025$,\ $014 \mid 026$,\ $014 \mid 035$,\ $014 \mid 036$,\ $014 \mid 125$,\ $014 \mid 234$,\ $014 \mid 235$,\ $014 \mid 256$,\ $014 \mid 346$,\ $014 \mid 356$,\ $015 \mid 023$,\ $015 \mid 026$,\ $015 \mid 036$,\ $015 \mid 046$,\ $015 \mid 136$,\ $015 \mid 234$,\ $015 \mid 346$,\ $016 \mid 023$,\ $016 \mid 123$,\ $016 \mid 124$,\ $016 \mid 125$,\ $016 \mid 134$,\ $016 \mid 145$,\ $016 \mid 234$,\ $016 \mid 235$,\ $016 \mid 345$,\ $016 \mid 346$,\ $023 \mid 124$,\ $023 \mid 125$,\ $023 \mid 126$,\ $023 \mid 134$,\ $023 \mid 136$,\ $023 \mid 145$,\ $023 \mid 156$,\ $023 \mid 246$,\ $023 \mid 256$,\ $023 \mid 346$,\ $023 \mid 356$,\ $023 \mid 456$,\ $024 \mid 035$,\ $024 \mid 036$,\ $024 \mid 125$,\ $024 \mid 136$,\ $024 \mid 145$,\ $024 \mid 156$,\ $024 \mid 235$,\ $024 \mid 256$,\ $024 \mid 346$,\ $024 \mid 356$,\ $024 \mid 456$,\ $025 \mid 036$,\ $025 \mid 046$,\ $025 \mid 136$,\ $025 \mid 145$,\ $025 \mid 156$,\ $025 \mid 346$,\ $025 \mid 356$,\ $025 \mid 456$,\ $026 \mid 124$,\ $026 \mid 125$,\ $026 \mid 134$,\ $026 \mid 145$,\ $026 \mid 156$,\ $035 \mid 046$,\ $035 \mid 123$,\ $035 \mid 124$,\ $035 \mid 125$,\ $035 \mid 126$,\ $035 \mid 134$,\ $035 \mid 136$,\ $035 \mid 145$,\ $035 \mid 156$,\ $035 \mid 234$,\ $035 \mid 246$,\ $035 \mid 346$,\ $036 \mid 124$,\ $036 \mid 125$,\ $036 \mid 126$,\ $036 \mid 134$,\ $036 \mid 145$,\ $036 \mid 156$,\ $036 \mid 246$,\ $036 \mid 256$,\ $045 \mid 123$,\ $045 \mid 126$,\ $045 \mid 134$,\ $045 \mid 136$,\ $045 \mid 156$,\ $045 \mid 234$,\ $045 \mid 235$,\ $045 \mid 346$,\ $045 \mid 356$,\ $046 \mid 123$,\ $046 \mid 124$,\ $046 \mid 125$,\ $046 \mid 126$,\ $046 \mid 145$,\ $046 \mid 156$,\ $046 \mid 235$,\ $046 \mid 256$,\ $046 \mid 356$,\ $123 \mid 145$,\ $123 \mid 345$,\ $123 \mid 456$,\ $124 \mid 235$,\ $124 \mid 256$,\ $124 \mid 346$,\ $124 \mid 356$,\ $124 \mid 456$,\ $125 \mid 136$,\ $125 \mid 346$,\ $125 \mid 456$,\ $126 \mid 134$,\ $126 \mid 145$,\ $126 \mid 234$,\ $126 \mid 235$,\ $126 \mid 345$,\ $126 \mid 346$,\ $126 \mid 356$,\ $126 \mid 456$,\ $136 \mid 145$,\ $136 \mid 234$,\ $136 \mid 235$,\ $136 \mid 345$,\ $145 \mid 234$,\ $145 \mid 235$,\ $145 \mid 346$,\ $156 \mid 234$,\ $156 \mid 235$,\ $156 \mid 345$,\ $156 \mid 346$,\ $234 \mid 356$,\ $234 \mid 456$,\ $235 \mid 246$,\ $235 \mid 456$,\ $246 \mid 356$.\par}

\subsection*{The 126 singleton fibers (forced edges)}
{\footnotesize\sloppy $013 \mid 014$,\ $013 \mid 015$,\ $013 \mid 016$,\ $013 \mid 023$,\ $013 \mid 035$,\ $013 \mid 036$,\ $013 \mid 123$,\ $013 \mid 134$,\ $013 \mid 136$,\ $014 \mid 015$,\ $014 \mid 016$,\ $014 \mid 024$,\ $014 \mid 045$,\ $014 \mid 046$,\ $014 \mid 124$,\ $014 \mid 134$,\ $014 \mid 145$,\ $015 \mid 016$,\ $015 \mid 025$,\ $015 \mid 035$,\ $015 \mid 045$,\ $015 \mid 125$,\ $015 \mid 145$,\ $015 \mid 156$,\ $016 \mid 026$,\ $016 \mid 036$,\ $016 \mid 046$,\ $016 \mid 126$,\ $016 \mid 136$,\ $016 \mid 156$,\ $023 \mid 024$,\ $023 \mid 025$,\ $023 \mid 026$,\ $023 \mid 035$,\ $023 \mid 036$,\ $023 \mid 123$,\ $023 \mid 234$,\ $023 \mid 235$,\ $024 \mid 025$,\ $024 \mid 026$,\ $024 \mid 045$,\ $024 \mid 046$,\ $024 \mid 124$,\ $024 \mid 234$,\ $024 \mid 246$,\ $025 \mid 026$,\ $025 \mid 035$,\ $025 \mid 045$,\ $025 \mid 125$,\ $025 \mid 235$,\ $025 \mid 256$,\ $026 \mid 036$,\ $026 \mid 046$,\ $026 \mid 126$,\ $026 \mid 246$,\ $026 \mid 256$,\ $035 \mid 036$,\ $035 \mid 045$,\ $035 \mid 235$,\ $035 \mid 345$,\ $035 \mid 356$,\ $036 \mid 046$,\ $036 \mid 136$,\ $036 \mid 346$,\ $036 \mid 356$,\ $045 \mid 046$,\ $045 \mid 145$,\ $045 \mid 345$,\ $045 \mid 456$,\ $046 \mid 246$,\ $046 \mid 346$,\ $046 \mid 456$,\ $123 \mid 124$,\ $123 \mid 125$,\ $123 \mid 126$,\ $123 \mid 134$,\ $123 \mid 136$,\ $123 \mid 234$,\ $123 \mid 235$,\ $124 \mid 125$,\ $124 \mid 126$,\ $124 \mid 134$,\ $124 \mid 145$,\ $124 \mid 234$,\ $124 \mid 246$,\ $125 \mid 126$,\ $125 \mid 145$,\ $125 \mid 156$,\ $125 \mid 235$,\ $125 \mid 256$,\ $126 \mid 136$,\ $126 \mid 156$,\ $126 \mid 246$,\ $126 \mid 256$,\ $134 \mid 136$,\ $134 \mid 145$,\ $134 \mid 234$,\ $134 \mid 345$,\ $134 \mid 346$,\ $136 \mid 156$,\ $136 \mid 346$,\ $136 \mid 356$,\ $145 \mid 156$,\ $145 \mid 345$,\ $145 \mid 456$,\ $156 \mid 256$,\ $156 \mid 356$,\ $156 \mid 456$,\ $234 \mid 235$,\ $234 \mid 246$,\ $234 \mid 345$,\ $234 \mid 346$,\ $235 \mid 256$,\ $235 \mid 345$,\ $235 \mid 356$,\ $246 \mid 256$,\ $246 \mid 346$,\ $246 \mid 456$,\ $256 \mid 356$,\ $256 \mid 456$,\ $345 \mid 346$,\ $345 \mid 356$,\ $345 \mid 456$,\ $346 \mid 356$,\ $346 \mid 456$,\ $356 \mid 456$.\par}

\subsection*{The 105 two-way fibers}
Each fiber offers two candidate edges; the one in $\Delta$ is underlined.
{\footnotesize\sloppy $\{\underline{013 \mid 024},\ 014 \mid 023\}$,\ $\{\underline{013 \mid 025},\ 015 \mid 023\}$,\ $\{\underline{013 \mid 026},\ 016 \mid 023\}$,\ $\{\underline{013 \mid 045},\ 014 \mid 035\}$,\ $\{\underline{013 \mid 046},\ 014 \mid 036\}$,\ $\{013 \mid 124,\ \underline{014 \mid 123}\}$,\ $\{013 \mid 125,\ \underline{015 \mid 123}\}$,\ $\{\underline{013 \mid 126},\ 016 \mid 123\}$,\ $\{013 \mid 145,\ \underline{015 \mid 134}\}$,\ $\{\underline{013 \mid 156},\ 015 \mid 136\}$,\ $\{\underline{013 \mid 234},\ 023 \mid 134\}$,\ $\{\underline{013 \mid 235},\ 035 \mid 123\}$,\ $\{\underline{013 \mid 345},\ 035 \mid 134\}$,\ $\{\underline{013 \mid 346},\ 036 \mid 134\}$,\ $\{\underline{013 \mid 356},\ 035 \mid 136\}$,\ $\{014 \mid 025,\ \underline{015 \mid 024}\}$,\ $\{014 \mid 026,\ \underline{016 \mid 024}\}$,\ $\{014 \mid 125,\ \underline{015 \mid 124}\}$,\ $\{\underline{014 \mid 126},\ 016 \mid 124\}$,\ $\{\underline{014 \mid 136},\ 016 \mid 134\}$,\ $\{\underline{014 \mid 156},\ 016 \mid 145\}$,\ $\{014 \mid 234,\ \underline{024 \mid 134}\}$,\ $\{\underline{014 \mid 246},\ 046 \mid 124\}$,\ $\{\underline{014 \mid 345},\ 045 \mid 134\}$,\ $\{014 \mid 346,\ \underline{046 \mid 134}\}$,\ $\{\underline{014 \mid 456},\ 046 \mid 145\}$,\ $\{015 \mid 026,\ \underline{016 \mid 025}\}$,\ $\{015 \mid 036,\ \underline{016 \mid 035}\}$,\ $\{015 \mid 046,\ \underline{016 \mid 045}\}$,\ $\{\underline{015 \mid 126},\ 016 \mid 125\}$,\ $\{\underline{015 \mid 235},\ 035 \mid 125\}$,\ $\{\underline{015 \mid 256},\ 025 \mid 156\}$,\ $\{\underline{015 \mid 345},\ 035 \mid 145\}$,\ $\{\underline{015 \mid 356},\ 035 \mid 156\}$,\ $\{\underline{015 \mid 456},\ 045 \mid 156\}$,\ $\{\underline{016 \mid 246},\ 046 \mid 126\}$,\ $\{\underline{016 \mid 256},\ 026 \mid 156\}$,\ $\{016 \mid 346,\ \underline{046 \mid 136}\}$,\ $\{\underline{016 \mid 356},\ 036 \mid 156\}$,\ $\{\underline{016 \mid 456},\ 046 \mid 156\}$,\ $\{\underline{023 \mid 045},\ 024 \mid 035\}$,\ $\{\underline{023 \mid 046},\ 024 \mid 036\}$,\ $\{023 \mid 124,\ \underline{024 \mid 123}\}$,\ $\{023 \mid 125,\ \underline{025 \mid 123}\}$,\ $\{023 \mid 126,\ \underline{026 \mid 123}\}$,\ $\{023 \mid 136,\ \underline{036 \mid 123}\}$,\ $\{023 \mid 246,\ \underline{026 \mid 234}\}$,\ $\{023 \mid 256,\ \underline{026 \mid 235}\}$,\ $\{\underline{023 \mid 345},\ 035 \mid 234\}$,\ $\{023 \mid 346,\ \underline{036 \mid 234}\}$,\ $\{023 \mid 356,\ \underline{036 \mid 235}\}$,\ $\{024 \mid 125,\ \underline{025 \mid 124}\}$,\ $\{\underline{024 \mid 126},\ 026 \mid 124\}$,\ $\{024 \mid 145,\ \underline{045 \mid 124}\}$,\ $\{024 \mid 235,\ \underline{025 \mid 234}\}$,\ $\{024 \mid 256,\ \underline{025 \mid 246}\}$,\ $\{\underline{024 \mid 345},\ 045 \mid 234\}$,\ $\{024 \mid 346,\ \underline{046 \mid 234}\}$,\ $\{024 \mid 456,\ \underline{045 \mid 246}\}$,\ $\{025 \mid 036,\ \underline{026 \mid 035}\}$,\ $\{025 \mid 046,\ \underline{026 \mid 045}\}$,\ $\{\underline{025 \mid 126},\ 026 \mid 125\}$,\ $\{025 \mid 145,\ \underline{045 \mid 125}\}$,\ $\{\underline{025 \mid 345},\ 045 \mid 235\}$,\ $\{025 \mid 356,\ \underline{035 \mid 256}\}$,\ $\{025 \mid 456,\ \underline{045 \mid 256}\}$,\ $\{\underline{026 \mid 136},\ 036 \mid 126\}$,\ $\{\underline{026 \mid 346},\ 036 \mid 246\}$,\ $\{\underline{026 \mid 356},\ 036 \mid 256\}$,\ $\{\underline{026 \mid 456},\ 046 \mid 256\}$,\ $\{035 \mid 046,\ \underline{036 \mid 045}\}$,\ $\{035 \mid 346,\ \underline{036 \mid 345}\}$,\ $\{\underline{035 \mid 456},\ 045 \mid 356\}$,\ $\{\underline{036 \mid 456},\ 046 \mid 356\}$,\ $\{045 \mid 346,\ \underline{046 \mid 345}\}$,\ $\{123 \mid 145,\ \underline{125 \mid 134}\}$,\ $\{\underline{123 \mid 156},\ 125 \mid 136\}$,\ $\{\underline{123 \mid 246},\ 126 \mid 234\}$,\ $\{\underline{123 \mid 256},\ 126 \mid 235\}$,\ $\{123 \mid 345,\ \underline{134 \mid 235}\}$,\ $\{\underline{123 \mid 346},\ 136 \mid 234\}$,\ $\{\underline{123 \mid 356},\ 136 \mid 235\}$,\ $\{\underline{124 \mid 136},\ 126 \mid 134\}$,\ $\{\underline{124 \mid 156},\ 126 \mid 145\}$,\ $\{124 \mid 235,\ \underline{125 \mid 234}\}$,\ $\{124 \mid 256,\ \underline{125 \mid 246}\}$,\ $\{\underline{124 \mid 345},\ 145 \mid 234\}$,\ $\{124 \mid 346,\ \underline{134 \mid 246}\}$,\ $\{124 \mid 456,\ \underline{145 \mid 246}\}$,\ $\{\underline{125 \mid 345},\ 145 \mid 235\}$,\ $\{\underline{125 \mid 356},\ 156 \mid 235\}$,\ $\{125 \mid 456,\ \underline{145 \mid 256}\}$,\ $\{126 \mid 346,\ \underline{136 \mid 246}\}$,\ $\{126 \mid 356,\ \underline{136 \mid 256}\}$,\ $\{126 \mid 456,\ \underline{156 \mid 246}\}$,\ $\{\underline{134 \mid 156},\ 136 \mid 145\}$,\ $\{\underline{134 \mid 356},\ 136 \mid 345\}$,\ $\{\underline{134 \mid 456},\ 145 \mid 346\}$,\ $\{\underline{136 \mid 456},\ 156 \mid 346\}$,\ $\{\underline{145 \mid 356},\ 156 \mid 345\}$,\ $\{\underline{234 \mid 256},\ 235 \mid 246\}$,\ $\{234 \mid 356,\ \underline{235 \mid 346}\}$,\ $\{234 \mid 456,\ \underline{246 \mid 345}\}$,\ $\{235 \mid 456,\ \underline{256 \mid 345}\}$,\ $\{246 \mid 356,\ \underline{256 \mid 346}\}$.\par}

\subsection*{The seven fibers of size six}
One fiber for each ground-set element; each offers six candidate edges, the one in $\Delta$ underlined.
\begin{itemize}\footnotesize
\item $\{\underline{013 \mid 246},\ 016 \mid 234,\ 024 \mid 136,\ 026 \mid 134,\ 036 \mid 124,\ 046 \mid 123\}$
\item $\{\underline{013 \mid 256},\ 016 \mid 235,\ 023 \mid 156,\ 025 \mid 136,\ 035 \mid 126,\ 036 \mid 125\}$
\item $\{\underline{013 \mid 456},\ 014 \mid 356,\ 015 \mid 346,\ 016 \mid 345,\ 036 \mid 145,\ 045 \mid 136\}$
\item $\{014 \mid 235,\ 015 \mid 234,\ 023 \mid 145,\ \underline{025 \mid 134},\ 035 \mid 124,\ 045 \mid 123\}$
\item $\{014 \mid 256,\ \underline{015 \mid 246},\ 024 \mid 156,\ 026 \mid 145,\ 045 \mid 126,\ 046 \mid 125\}$
\item $\{023 \mid 456,\ 024 \mid 356,\ 025 \mid 346,\ \underline{026 \mid 345},\ 035 \mid 246,\ 046 \mid 235\}$
\item $\{123 \mid 456,\ 124 \mid 356,\ 125 \mid 346,\ 126 \mid 345,\ \underline{134 \mid 256},\ 156 \mid 234\}$
\end{itemize}

\end{document}